\numberwithin{equation}{section}
\numberwithin{figure}{section}
\theoremstyle{plain}
\newtheorem{thm}{\protect\theoremname}
  \theoremstyle{plain}
  \newtheorem{lem}[thm]{\protect\lemmaname}
  \theoremstyle{plain}
  \newtheorem{cor}[thm]{\protect\corname}
   \theoremstyle{plain}
  \newtheorem{claim}[thm]{\protect\claimname}
  \providecommand{\lemmaname}{Lemma}
  \providecommand{\claimname}{Claim}
\providecommand{\theoremname}{Theorem}
\providecommand{\corname}{Corollary}
\begin{document}
\global\long\def\f{\mathcal{F}}
\global\long\def\A{\mathcal{A}}
\global\long\def\B{\mathcal{B}}
\global\long\def\l{\mathcal{L}}
\global\long\def\pn{\mathcal{P}\left(\left[n\right]\right)}
\global\long\def\g{\mathcal{G}}
\global\long\def\s{\mathcal{S}}
\global\long\def\m{\mathcal{M}}
\global\long\def\Bin{\mathrm{Bin}}
\global\long\def\mh{\mu_{\frac{1}{2}}}
\global\long\def\mp{\mu_{p}}
\global\long\def\j{\mathcal{J}}
\global\long\def\d{\mathcal{D}}
\global\long\def\Inf{\mathrm{Inf}}
\global\long\def\p{\mathcal{P}}
\global\long\def\mpo{\mu_{p_{0}}}
\global\long\def\fpp{f_{p}^{p_{0}}}
\global\long\def\h{\mathcal{H}}
\global\long\def\OR{\mathrm{OR}}

\title{On the union of intersecting families}
\author{David Ellis}
\address{School of Mathematical Sciences, Queen Mary, University of London, Mile End Road, London, E1 4NS, UK.}
\email{d.ellis@qmul.ac.uk}
\author{Noam Lifshitz}
\address{Department of Mathematics, Bar Ilan University, Ramat Gan, Israel.}
\email{noamlifshitz@gmail.com}
\date{19th August 2018}

\begin{abstract}
A family of sets is said to be {\em intersecting} if any two sets in the family have nonempty intersection. In 1973, Erd\H{o}s raised the problem of determining the maximum possible size of a union of $r$ different intersecting families of $k$-element subsets of an $n$-element set, for each triple of integers $(n,k,r)$. We make progress on this problem, proving that for any fixed integer $r \geq 2$ and for any $k \leq (\tfrac{1}{2}-o(1))n$, if $X$ is an $n$-element set, and $\f  = \f_1 \cup \f_2 \cup \ldots \cup \f_r$, where each $\f_i$ is an intersecting family of $k$-element subsets of $X$, then $|\f| \leq {n \choose k} - {n-r \choose k}$, with equality only if $\f = \{S \subset X:\ |S|=k,\ S \cap R \neq \emptyset\}$ for some $R \subset X$ with $|R|=r$. This is best possible up to the size of the $o(1)$ term, and improves a 1987 result of Frankl and F\"uredi, who obtained the same conclusion under the stronger hypothesis $k < (3-\sqrt{5})n/2$, in the case $r=2$. Our proof utilises an isoperimetric, influence-based method recently developed by Keller and the authors. 
\end{abstract}

\maketitle

\section{Introduction}

Let $[n]:=\{1,2,\ldots,n\}$, and let $\binom{[n]}{k} :=\{S \subset [n]:\ |S|=k\}$. If $X$ is a set, we let $\p(X)$ denote the power-set of $X$. A family $\f \subset \p([n])$ is said to be $1$-\emph{intersecting} (or just {\em intersecting}) if for
any $A,B \in \f$, we have $A \cap B \neq \emptyset$.

One of the best-known theorems in extremal combinatorics is the Erd\H{o}s-Ko-Rado
theorem~\cite{EKR}, which bounds the size of an intersecting subfamily of $\binom{[n]}{k}$.
\begin{thm}[Erd\H{o}s-Ko-Rado, 1961]
\label{thm:ekr}
Let $k,n \in \mathbb{N}$ with $k < n/2$. If $\f \subset \binom{[n]}{k}$ is intersecting, then $|\f| \leq \binom{n-1}{k-1}$. Equality holds only if $\f = \{S \in \binom{[n]}{k}:\ j \in S\}$ for some $j \in [n]$.
\end{thm}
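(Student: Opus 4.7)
My plan is to prove Theorem~\ref{thm:ekr} by Katona's cycle method. The strategy is to work in the uniform probability space on cyclic orderings of $[n]$ (of which there are $(n-1)!$), to call a $k$-subset an \emph{arc} of an ordering $\pi$ if its elements occupy $k$ consecutive positions on the cycle, and to double count pairs $(\pi,A)$ with $A\in\f$ an arc of $\pi$.

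The heart of the argument is the following lemma, which I would prove first: for every cyclic ordering $\pi$ of $[n]$ with $n\geq 2k+1$, at most $k$ arcs of length $k$ of $\pi$ lie in $\f$. To prove it, label the arcs cyclically as $A_0, A_1, \ldots, A_{n-1}$ and assume $A_0\in\f$; any other $A_j\in\f$ must meet $A_0$, forcing $j\in\{-(k-1),\ldots,-1,1,\ldots,k-1\}\pmod{n}$. The key observation is that, since $n\geq 2k+1$, for each $i\in\{1,2,\ldots,k-1\}$ the two arcs $A_{-i}$ and $A_{k-i}$ are disjoint, so $\f$ contains at most one of them; this gives at most $1+(k-1)=k$ arcs of $\pi$ in $\f$. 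The hypothesis $k<n/2$ enters precisely through the disjointness of these pairs, and I expect this arc-counting lemma to be the main step.

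With the lemma in hand, the bound follows by a routine double count. By symmetry each $A\in\binom{[n]}{k}$ is an arc in exactly $k!(n-k)!$ cyclic orderings (think of $A$ as a block and arrange the resulting $n-k+1$ objects cyclically). Summing the lemma over all cyclic orderings gives
\[
|\f|\cdot k!(n-k)! \;\leq\; (n-1)!\cdot k,
\]
which rearranges to $|\f|\leq\binom{n-1}{k-1}$.

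For the uniqueness statement I would analyse the equality case: if $|\f|=\binom{n-1}{k-1}$, then in every cyclic ordering $\pi$ exactly $k$ arcs of $\pi$ lie in $\f$. A refinement of the disjoint-pair analysis shows that these $k$ arcs necessarily form a block of consecutive starting positions on the cycle, and hence share a common element $j(\pi)\in[n]$. A short argument comparing $j(\pi)$ across cyclic orderings that differ by a single adjacent transposition then forces this common element to be the same for all $\pi$, so every set in $\f$ contains some fixed $j\in[n]$, proving $\f$ is a star.
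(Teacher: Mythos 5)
The paper does not prove Theorem~\ref{thm:ekr}: it is stated as a classical result and cited from the original 1961 paper \cite{EKR} without proof, so there is no argument in the paper to compare against. Your proposal is Katona's cycle method, which is a well-known proof genuinely different from the original Erd\H{o}s--Ko--Rado argument (the latter proceeds by shifting/compression); it buys a very short, self-contained proof of the bound at the cost of a somewhat fiddlier equality analysis.

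The bound part of your argument is complete and correct. The pairing of $A_{-i}$ with $A_{k-i}$ for $1\le i\le k-1$ does partition the $2(k-1)$ indices in $\{-(k-1),\ldots,-1,1,\ldots,k-1\}$ into $k-1$ disjoint pairs; each pair consists of two disjoint arcs precisely because $n>2k-1$, so at most one arc from each pair lies in $\f$, giving at most $k$ arcs in total. The count $k!(n-k)!$ of cyclic orderings in which a fixed $k$-set is an arc is right, and the double count rearranges to $|\f|\le\binom{n-1}{k-1}$.

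The uniqueness part is an outline rather than a proof, and the last step is doing real work that is not spelled out. Your intermediate claim is fine: when exactly $k$ arcs lie in $\f$, the pairwise-intersection constraint forces their starting indices to be contained in some interval $\{m-(k-1),\ldots,m+(k-1)\}$ of length $2k-1<n$ with all pairwise gaps at most $k-1$, hence to be exactly the interval $\{m,\ldots,m+k-1\}$, and all these arcs pass through position $m+k-1$, giving a common element $j(\pi)$. But the assertion that $j(\pi)$ is invariant under an adjacent transposition conceals a case analysis: swapping two neighbouring entries of the cycle changes exactly two arcs (as $k$-sets), namely the two whose index is congruent to $a+1$ or $a-k+1$ when positions $a$ and $a+1$ are swapped, and one must then track how the interval of starting indices, its endpoint $m+k-1$, and the element sitting there can all move. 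This can be carried through and the conclusion is correct, but as written it is a gap; if you intend this as a full proof you should work out that step (or replace it with a direct argument, e.g.\ showing that if some $A\in\f$ omitted $j$ one could exhibit a cyclic order containing strictly fewer than $k$ arcs of $\f$).
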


In 1987, Frankl and F\"uredi \cite{ff} considered the problem, first raised by Erd\H{o}s \cite{erdos} in 1973, of determining the maximum possible size of a union of $r$ 1-intersecting subfamilies of $\binom{[n]}{k}$, for each triple of integers $(n,k,r)$. They proved the following.
\begin{thm}[Frankl, F\"uredi, 1986]
\label{thm:ff}
If $\f \subset \binom{[n]}{k}$ is a union of two intersecting families, and $n > \tfrac{1}{2}(3+\sqrt{5})k \approx 2.62 k$, then $|\f| \leq {n \choose k}-{n-2 \choose k}$. Equality holds only if $\f = \{S \in \binom{[n]}{k}:\ S \cap \{i,j\} \neq \emptyset\}$, for some distinct $i,j \in [n]$. 
\end{thm}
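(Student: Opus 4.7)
The plan is to split into two cases depending on whether at least one of $\f_1, \f_2$ is a star --- where, following standard usage, I call an intersecting family a \emph{star} if all its members share a common element.

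Suppose first that one of the families, say $\f_1$, is a star: $\f_1 \subseteq \{A \in \binom{[n]}{k}: x \in A\}$ for some $x \in [n]$. Then the family
\[ \f_2' := \f_2 \cap \{A \in \binom{[n]}{k}: x \notin A\} \]
is an intersecting family of $k$-subsets of the $(n-1)$-set $[n] \setminus \{x\}$. Since $n > \tfrac{1}{2}(3+\sqrt{5}) k$ implies $n - 1 \geq 2k$ (for all $k \geq 2$), Theorem~\ref{thm:ekr} gives $|\f_2'| \leq \binom{n-2}{k-1}$, and hence
\[ |\f| \leq \bigl|\{A : x \in A\}\bigr| + |\f_2'| \leq \binom{n-1}{k-1} + \binom{n-2}{k-1} = \binom{n}{k} - \binom{n-2}{k}. \]
For equality, one needs $\f \supseteq \{A : x \in A\}$ and $\f_2'$ to be EKR-extremal on $[n] \setminus \{x\}$. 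By the uniqueness clause of Theorem~\ref{thm:ekr}, $\f_2'$ must then be a star on $[n] \setminus \{x\}$ centred at some $y \neq x$, which forces $\f = \{A : A \cap \{x,y\} \neq \emptyset\}$ as in the theorem statement.

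The central obstacle is to rule out the remaining case, in which \emph{both} $\f_1$ and $\f_2$ have covering number at least $2$. The Hilton--Milner theorem yields $|\f_i| \leq \binom{n-1}{k-1} - \binom{n-k-1}{k-1} + 1$ for each such $\f_i$, but summing these bounds is not sharp enough: in the regime $n \sim \phi^{2} k$ (where $\phi = \tfrac{1}{2}(1+\sqrt{5})$ is the golden ratio) and $k$ large, one has $\binom{n-k-1}{k-1} \ll \binom{n-2}{k-2}$, and the Hilton--Milner gap alone does not cover the difference between $2 \binom{n-1}{k-1}$ and the target $\binom{n-1}{k-1} + \binom{n-2}{k-1}$. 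To close this case one needs a more refined combinatorial argument, following Frankl and F\"uredi: apply a careful compression/shifting procedure to bring $\f_1$ and $\f_2$ into a canonical ``shifted'' configuration (taking care that the union $\f_1 \cup \f_2$ does not shrink), and then perform a structural case analysis of large non-star intersecting families --- invoking results of Frankl on intersecting families of bounded covering number --- to derive the desired bound. The constant $\tfrac{1}{2}(3+\sqrt{5}) = \phi^{2}$ in the hypothesis is exactly the threshold at which this case analysis succeeds, via a binomial inequality relating $\binom{n-2}{k-1}$, $\binom{n-2}{k-2}$, and $\binom{n-k-1}{k-1}$ that fails below $\phi^{2} k$.

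Uniqueness of the extremal family then follows from strict inequality in the non-star case, combined with the equality analysis from the star case above: whenever $|\f| = \binom{n}{k} - \binom{n-2}{k}$, at least one $\f_i$ must be a star, and the star-case argument pins down $\f$ as the union of two distinct stars.
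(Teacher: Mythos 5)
Note first that the paper does not prove Theorem \ref{thm:ff} itself: it is cited as prior work of Frankl and F\"uredi, and the paper's original contribution (Theorem \ref{thm:main}) is a strengthening proved by an entirely different, isoperimetric/influence-based method. So there is no ``paper's proof'' for your attempt to track; the question is simply whether your argument stands on its own.

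Your star case is fine. If $\f_1 \subseteq \{A \in \binom{[n]}{k} : x \in A\}$, then $\f_2' := \f_2 \cap \{A : x \notin A\}$ is an intersecting $k$-uniform family on $[n]\setminus\{x\}$, and $n > \tfrac{1}{2}(3+\sqrt 5)k$ forces $n \geq 2k+2$ for $k\geq 2$, so $k < (n-1)/2$ and Theorem \ref{thm:ekr} gives $|\f_2'| \leq \binom{n-2}{k-1}$. Hence $|\f| \leq \binom{n-1}{k-1}+\binom{n-2}{k-1} = \binom{n}{k}-\binom{n-2}{k}$, and the uniqueness clause of Theorem \ref{thm:ekr} identifies the extremal configuration, exactly as you say.

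The genuine gap is the non-star case, which is where the theorem actually lives. You correctly observe that summing Hilton--Milner bounds is far from sufficient when $n$ is close to $\tfrac{1}{2}(3+\sqrt 5)k$, but you then replace the proof with a description of what a proof ought to look like: apply some compression/shifting procedure, perform a structural case analysis, invoke results of Frankl on intersecting families of bounded covering number, and conclude via a binomial inequality that is said to fail below the threshold. None of these steps is specified, let alone carried out: which compressions preserve the ``union of two intersecting families'' hypothesis and do not shrink the union, what canonical configuration they produce, which theorem of Frankl is invoked and with what quantitative content, and what the decisive binomial inequality actually is, are all left unstated. Consequently the bound $|\f| \leq \binom{n}{k}-\binom{n-2}{k}$ in the non-star case is merely asserted, and the provenance of the constant $\tfrac{1}{2}(3+\sqrt 5)$ is never established. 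Your uniqueness claim also depends on having strict inequality in the non-star case, so it inherits the same gap. In short: the easy half of the theorem is handled correctly, but the hard half is missing entirely.
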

They give an example which shows that the upper bound in Theorem \ref{thm:ff} does not hold provided if $n_0 \leq n \leq 2k+c_0\sqrt{k
}$, where $n_0,c_0>0$ are absolute constants with $n_0$ sufficiently large and $c_0$ sufficiently small; this disproved a conjecture of Erd\H{o}s in \cite{erdos}.

In this paper, we prove the following strengthening and generalisation of Theorem \ref{thm:ff}.
\begin{thm}
\label{thm:main}For each integer $r \geq 2$, there exists a constant $C = C(r) \in \mathbb{N}$ such that
the following holds. Let $n\ge2k+Ck^{2/3}$, and let $\f \subset \binom{\left[n\right]}{k}$
be a union of at most $r$ $1$-intersecting families. Then $\left|\f\right|\le\binom{n}{k}-\binom{n-r}{k}$, and equality holds only if $\f = \{S \in \binom{[n]}{k}:\ S \cap R \neq \emptyset\}$ for some $R \in \binom{[n]}{r}$.
\end{thm}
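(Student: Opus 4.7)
My plan is to work in the $p$-biased measure setting and apply the junta approximation method of Keller and the present authors. Let $p := k/n$. Since the conjectured extremal family $\{S : S \cap R \neq \emptyset\}$ with $|R|=r$ has $\mp$-measure $1-(1-p)^r$, the natural target is the $\mp$-statement: for $p \le 1/2 - \Omega(1)$ and $\f \subset \pn$ a union of $r$ intersecting families, $\mp(\f) \le 1-(1-p)^r$, with equality only for this cover family for some $R \in \binom{[n]}{r}$. This is transferred to $\binom{[n]}{k}$ by choosing $p$ slightly above $k/n$ and using a standard $\mp$-to-slice comparison for monotone families; the hypothesis $n \ge 2k + Ck^{2/3}$ is exactly what guarantees the separation from $p=1/2$ needed for the hypercontractive input below.

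The main engine is a junta approximation lemma: for $p$ bounded away from $1$, every intersecting family is $\mp$-approximated, up to additive error $o(1)$, by an intersecting junta on $O_p(1)$ coordinates. The proof splits into a pseudorandom case (a hypercontractive bound on spectral correlations with dictators gives a sharp measure bound) and a structured case (the family is already correlated with a junta, and one restricts and recurses). Applying this to each $\f_i$ and taking a common coordinate set $J \subset [n]$ with $|J| = C(r)$, one obtains a union $\j = \j_1 \cup \cdots \cup \j_r$ of $r$ intersecting $J$-juntas with $\mp(\f \setminus \j)$ negligibly small. The problem then reduces to a finite optimisation: enumerating unions of $r$ intersecting families on the constant ground set $J$ under $\mp$, one checks that the unique maximiser is the cover family $\{S : S \cap R \neq \emptyset\}$ for some $R \in \binom{J}{r}$ (since, for $p<1/2$, the only intersecting juntas of maximal measure are single dictators), with a winning margin of $\Omega_p(1)$ over every competitor. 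This yields the approximate bound $\mp(\f) \le 1-(1-p)^r + o(1)$ and identifies the approximate structure.

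The last step sharpens the $o(1)$ to zero and returns to the $k$-uniform slice. Knowing $\f$ is $\mp$-close to a specific cover family $\{S : S \cap R \neq \emptyset\}$, I would argue that any $k$-set $A \in \f$ disjoint from $R$ forces, via the intersectingness of whichever $\f_i$ contains $A$, substantial missing mass from $\f$ among companions meeting $A$---enough to contradict $|\f| > \binom{n}{k}-\binom{n-r}{k}$; a shifting or Kruskal--Katona-style quantitative bound closes the gap and pins down the equality case. I expect the finite junta analysis in step two to be the main obstacle: one must rule out Hilton--Milner-type pieces, triangle/sunflower-type pieces, and asymmetric configurations in which only some of the $r$ summands are near-stars, with a margin over the cover family large enough to absorb the junta-approximation error. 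This is what fixes $C(r)$ and drives the quantitative boundary $n \ge 2k + Ck^{2/3}$.
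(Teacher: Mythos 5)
Your plan has a genuine gap at its core: the regime in which the theorem is new (and where the earlier Frankl--F\"uredi and Frankl matching results do not already apply) is $n - 2k = \Theta(k^{2/3})$, which forces $p \approx k/n$ to satisfy $\tfrac{1}{2}-p = \Theta(k^{-1/3}) \to 0$. Your stated $\mu_p$-target ("for $p \le 1/2 - \Omega(1)$ ...") and the junta approximation lemma you invoke are formulated with a fixed constant gap from $\tfrac12$; but once $p \to \tfrac12$, both the junta-approximation error and the "winning margin" of the cover family $\OR_R$ over competitors vanish, and the entire argument hinges on a rate comparison that you do not supply. Concretely, for $r=2$ the margin of $\OR_{\{1,2\}}$ over the Hilton--Milner-type family $\{S:\,1\in S\}\cup\{S:\,|S\cap\{2,3,4\}|\ge 2\}$ is $p(1-p)^2(1-2p) = \Theta(\tfrac12-p) = \Theta(k^{-1/3})$, so the junta error must beat $k^{-1/3}$; and the Frankl--F\"uredi construction in the paper shows the cover family actually \emph{loses} once $\tfrac12-p \lesssim k^{-1/2}$, so the margin genuinely degenerates. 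Generic junta approximation in the range where the constants stay fixed does not obviously give error $o(\tfrac12-p)$, and you do not carry out the quantitative analysis needed to see it does; this is where the proof would break.

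The paper takes a different route precisely to survive this vanishing-gap regime. Rather than junta-approximating each $\f_i$ separately, it works with a single family $\f$ and its up-closure $\f^\uparrow$ under $\mu_p$, uses the FKG/Harris inequality to get $\mh(\f^\uparrow)\le 1-2^{-r}$, and then applies Russo's lemma to locate a $p'\in(p,\tfrac12)$ at which $\f^\uparrow$ has near-minimal total influence. Feeding this into the stability version of the biased edge-isoperimetric inequality (Theorem~\ref{thm:iso-stability}) produces a multiplicative closeness of $\exp(-\Theta(s^2))$ with $s=(n-2k)/\sqrt{k}$, i.e.\ $\exp(-\Theta(C^2 k^{1/3}))$ here, which is superpolynomially smaller than the margin. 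Your finishing step is also too vague: the paper must first invoke Frankl's matching theorem to reduce to $n<(2r+1)k$, then prove a per-family stability lemma (Lemma~\ref{lem:stab-fi}) showing each $\f_i$ lies almost entirely inside a distinct dictatorship, then combine the pointwise indicator inequality of Claim~\ref{claim:indicator} with the cross-intersecting estimate of Lemma~\ref{lem:cross} to get exact equality and the uniqueness of the extremal family. A generic "shifting or Kruskal--Katona-style" closing argument would not reproduce this chain.
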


We note that even in the case $r=2$, the conclusion of Theorem \ref{thm:main} was previously known to hold only in the case $n-2k \geq \Omega(k)$ (i.e., only in the case $k/n \leq 1/2-\Omega(1)$). For the first time, we prove it for $n-2k = o(k)$ (i.e., for $k/n \leq 1/2-o(1)$), for any fixed $r \geq 2$, though the correct rate of growth of the $o(k)$ term here remains open. We conjecture that the conclusion of Theorem \ref{thm:main} holds for $n \geq 2k+c \sqrt{k}$ for $c = c(r)$ sufficiently large; this would be best-possible up to the value of $c$, as evidenced by the aforementioned construction of  Frankl and F\"uredi. It would be of great interest to determine the extremal families for every triple of integers $(n,k,r)$. 

We remark that if $\f \subset \p([n])$ is a union of at most $r$ $1$-intersecting subfamilies of $\p([n])$, then $|\f| \leq 2^n-2^{n-r}$. This was first proved by Kleitman \cite{kleitman} and is an easy consequence of the FKG inequality (see Lemma \ref{lem:fkg}); it is sharp, as evidenced by taking $\mathcal{F} = \cup_{i=1}^{r} \{S \subset [n]:\ i \in S\}$. In fact, we will use this bound in our proof of Theorem \ref{thm:main}.

We remark also that the problem considered here is closely related to the well-known Erd\H{o}s matching conjecture. Recall that the {\em matching number} $m(\f)$ of a family $\f \subset \p([n])$ is defined to be the maximum integer $s$ such that $\f$ contains $s$ pairwise disjoint sets. The 1965 \emph{Erd\H{o}s matching conjecture}~\cite{Erdos65} asserts that if $n,k,s \in \mathbb{N}$ with $n \geq (s+1)k$ and $\f \subset \binom{\left[n\right]}{k}$ with $m(\f) \leq s$, then
$$|\f| \leq \max\left\{{n \choose k} - {n-s \choose k}, {k(s+1)-1 \choose k}\right\}.$$
This conjecture remains open. Erd\H{o}s himself proved the conjecture
for all $n$ sufficiently large depending on $k$ and $s$, i.e.\ for all $n \geq n_0(k,s)$. The bound on $n_0(k,s)$ was lowered in several works: Bollob\'as, Daykin and Erd\H{o}s~\cite{BDE76} showed
that $n_0(k, s) \leq 2sk^3$; Huang, Loh and Sudakov~\cite{HLS12} showed that $n_0(k,s) \leq 3sk^2$, and Frankl and F\"{u}redi (unpublished)
showed that $n_0(k, s) =O(ks^2)$. One of the most significant results on the problem to date is the following theorem of Frankl~\cite{frankl-emc}.
\begin{thm}[Frankl, 2013]\label{thm:frankl-matching}
Let $n,k,s \in \mathbb{N}$ such that $n \geq (2s+1)k-s$, and let $\f \subset \binom{\left[n\right]}{k}$ such that $m(\f) \leq s$. Then
$|\f| \leq {n \choose k} - {n-s \choose k}$.
Equality holds if and only if there exists $S \in \binom{\left[n\right]}{s}$ such that $\f = \{F \in \binom{\left[n\right]}{k}:\ F \cap S \neq \emptyset\}$.
\end{thm}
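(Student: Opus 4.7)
My strategy is to use the shifting (left-compression) technique and induct on $s$. For $i < j$, the shift $\s_{ij}$ replaces each $F \in \f$ with $j \in F,\ i \notin F$ by $(F \setminus \{j\}) \cup \{i\}$ whenever the latter is not already in $\f$; classically this preserves $|\f|$ and does not increase $m(\f)$. We may therefore assume $\f$ is shifted: $F \in \f$, $i < j$, $j \in F$, $i \notin F$ imply $(F \setminus \{j\}) \cup \{i\} \in \f$.

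\textbf{Induction on $s$.} The base case $s = 1$ is exactly the Erd\H{o}s-Ko-Rado theorem (Theorem \ref{thm:ekr}). For the inductive step, I would partition $\f = \f(1) \sqcup \f(\bar{1})$ according to whether $1 \in F$; then $|\f(1)| \leq \binom{n-1}{k-1}$ trivially, and the inductive bound $|\f(\bar{1})| \leq \binom{n-1}{k} - \binom{n-s}{k}$ (viewing $\f(\bar{1})$ as a family of $k$-subsets of $[2,n]$) would yield $|\f| \leq \binom{n}{k} - \binom{n-s}{k}$ by the Pascal identity. Applying the induction hypothesis to $\f(\bar{1})$ with parameters $(n-1, k, s-1)$ requires the arithmetic condition $n - 1 \geq (2s-1)k - (s-1)$, which follows from $n \geq (2s+1)k - s$ since $k \geq 1$, and requires establishing the matching bound $m(\f(\bar{1})) \leq s - 1$.

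\textbf{Key structural lemma and edge case.} The crux is the dichotomy: if $\f \subset \binom{[n]}{k}$ is shifted with $m(\f) \leq s$ and $n \geq (2s+1)k - s$, then either (i) $m(\f(\bar{1})) \leq s - 1$ (so the inductive hypothesis closes), or (ii) $\f$ is confined to an initial segment $\binom{[N]}{k}$ with $N \leq (s+1)k - 1$, in which case $|\f| \leq \binom{(s+1)k-1}{k}$ is bounded by $\binom{n}{k} - \binom{n-s}{k}$ directly, with room to spare under $n \geq (2s+1)k - s$. To prove the dichotomy: suppose $m(\f(\bar{1})) = s$, witnessed by pairwise disjoint $M_1, \ldots, M_s \in \f(\bar{1})$; by shiftedness, we may arrange $\cup_i M_i = [2, sk+1]$. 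Since $m(\f) \leq s$, no set $F \in \f$ can satisfy $1 \in F \subseteq \{1\} \cup [sk+2, n]$ (otherwise $F, M_1, \ldots, M_s$ would be an $(s+1)$-matching in $\f$). Using shiftedness, which would generate such a forbidden $F$ from any set of $\f$ whose elements stretch too far into $[sk+2, n]$, this forbidden-region constraint propagates to confine all of $\f$ into a small initial segment of $[n]$.

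\textbf{Equality case and main obstacle.} If $|\f| = \binom{n}{k} - \binom{n-s}{k}$, then the edge case (ii) cannot hold since it has slack under the threshold on $n$; so we are in the inductive branch (i) with tight inequalities throughout. Then $\f(1)$ must be the full star at $1$, and $\f(\bar{1})$ must be the extremal $(s-1)$-star on some $T \in \binom{[2,n]}{s-1}$ by the induction hypothesis, giving $\f$ as the star on $\{1\} \cup T$; a standard uncompression argument (tracking which shifts are strict) then recovers the extremal characterisation for the original, not-necessarily-shifted $\f$. The main obstacle is the propagation step inside the key lemma: precisely how the forbidden-region constraint $1 \in F \not\subseteq \{1\} \cup [sk+2, n]$, combined with shiftedness, forces $\f$ into a small initial segment. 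This is where the threshold $n \geq (2s+1)k - s$ is used quantitatively, and making this argument rigorous — together with verifying the numerical inequality $\binom{(s+1)k-1}{k} \leq \binom{n}{k} - \binom{n-s}{k}$ for the edge case — constitutes the technical heart of the proof.
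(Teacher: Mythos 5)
Theorem \ref{thm:frankl-matching} is not proved in this paper; it is imported from Frankl's 2013 paper \cite{frankl-emc}, so there is no in-paper argument for you to match. Judged on its own merits, your proposal begins along natural lines (shift, then induct on $s$ by deleting the first coordinate, with the arithmetic $n-1\ge (2s-1)k-(s-1)$ checked correctly), but the dichotomy on which everything hangs is false, not merely unproved.

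You claim that if $\f$ is shifted with $m(\f)\le s$ and $m(\f(\bar 1))=s$, then $\f\subset\binom{[N]}{k}$ for some $N\le (s+1)k-1$. Take $s=2$, $k=3$, any $n\ge 13=(2s+1)k-s$, and let $\f\subset\binom{[n]}{3}$ be the shift-closure of $\bigl\{\{2,4,6\},\{3,5,7\},\{1,2,n\}\bigr\}$, so that
\[
\f=\{F:F\le\{3,5,7\}\text{ coordinatewise}\}\cup\{\{1,2,c\}:3\le c\le n\}.
\]
This is shifted by construction, and $\{2,4,6\},\{3,5,7\}\in\f(\bar 1)$ give $m(\f(\bar 1))\ge 2$. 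On the other hand $m(\f)=2$: every member of $\f$ has minimum in $\{1,2,3\}$, so three pairwise disjoint members would have distinct minima $1,2,3$ and hence have the form $\{1,a,a'\},\{2,b,b'\},\{3,c,c'\}$ with $a,a',b,b',c,c'\ge4$ distinct; the first triple cannot be of the form $\{1,2,c\}$ (it must avoid $2$), so all three are dominated by $\{3,5,7\}$ and lie in $[7]$, but then $a,a',b,b',c,c'$ would be six distinct members of $\{4,5,6,7\}$, which is absurd. Thus both hypotheses of your case (ii) hold, yet $\{1,2,n\}\in\f$, so $\f\not\subset\binom{[N]}{k}$ for \emph{any} $N<n$, let alone $N\le(s+1)k-1=8$.

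The underlying problem is that shiftedness together with $m(\f)\le s$ only forces $\f$ to avoid the up-set of the diagonal $D_{s+1}=\{s+1,2(s+1),\dots,k(s+1)\}$; equivalently, each $F=\{a_1<\dots<a_k\}\in\f$ has $a_i\le i(s+1)-1$ for \emph{some} $i$. This is far weaker than confinement to an initial segment (it is perfectly compatible with a single coordinate being as large as $n$, as in the example), and it does not by itself give $\binom{n}{k}-\binom{n-s}{k}$. Frankl's proof at the sharp threshold $n\ge(2s+1)k-s$ therefore does not proceed via your dichotomy; you would need a genuinely different invariant to handle the branch $m(\f(\bar 1))=s$, and the equality analysis you sketch, which presupposes the dichotomy, would have to be rebuilt around it.
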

Frankl and Kupavskii \cite{fk} recently proved that $n_0(k,s) \leq \tfrac{5}{3}ks-\tfrac{2}{3}s$ for all $s \geq s_0$ (for some absolute constant $s_0$), strengthening Theorem \ref{thm:frankl-matching} for $s$ sufficiently large.

Clearly, if $\f \subset \binom{\left[n\right]}{k}$ is a union of at most $r$ $1$-intersecting families, then $m(\f) \leq r$, so Theorem \ref{thm:frankl-matching} implies the conclusion of Theorem \ref{thm:main} under the (stronger) condition $n \geq (2r+1)k-r$.

\subsection*{Our proof techniques}
Our main tool is the following `stability' version of Theorem \ref{thm:main}.
\begin{thm}
\label{thm:stability}There exists an absolute constant $C_0>0$ such that the following holds. Let $r,k \in \mathbb{N}$ with $k \geq C_0r^2$, let $s \geq C_0 \sqrt{\log k}$, let $t \in \mathbb{N}$ with $t \geq s^2 k/n$, let $n\ge2k+s\sqrt{k}$, and let $\f\subset \binom{\left[n\right]}{k}$
be a family satisfying $\mh\left(\f^{\uparrow}\right)\le1-2^{-r}$
and $\left|\f\right|\ge\binom{n}{k}-\binom{n-r}{k}-\binom{n-r-t}{k-1}$. Then there exists $R \in \binom{[n]}{r}$ such
that $|\{S \in \f:\ S \cap R = \emptyset\}| \le 2^r \exp(-\Theta(s^2 k/n)) \binom{n-r}{k}$.
\end{thm}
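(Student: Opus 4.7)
The plan is to show that $\f^{\uparrow}$ is $\mh$-close to a star $\{S : S \cap R \neq \emptyset\}$ for some $R \in \binom{[n]}{r}$, and then to convert this closeness into the desired bound on sets in $\f$ avoiding $R$ within the slice $\binom{[n]}{k}$. The key tool is the isoperimetric / junta-approximation method of Keller and the authors, which asserts that any monotone family whose $\mh$-measure is very close to the maximum attained among unions of $r$ dictatorships must itself be $\mh$-close to such a union. The first step is to pin $\mh(\f^{\uparrow})$ near its extremal value $1-2^{-r}$: the upper bound is hypothesised, while a matching lower bound $\mh(\f^{\uparrow}) \ge 1-2^{-r}-\exp(-\Theta(s^2))$ follows from the size hypothesis together with $n/2 = k+O(s\sqrt{k})$, for instance by comparing the upward shadows of $\f$ at levels near $k$ via a Kruskal--Katona-style computation.

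With $\mh(\f^{\uparrow})$ thus pinned, applying the junta-approximation theorem produces a monotone junta $\j^{\uparrow}$ on a coordinate set $J \subset [n]$ of size $O_r(1)$, with $\mh(\f^{\uparrow} \triangle \j^{\uparrow}) \le \exp(-\Theta(s^2))$. A finite case analysis on monotone $J$-juntas of $\mh$-measure at least $1-2^{-r}-\exp(-\Theta(s^2))$ then forces $\j^{\uparrow} = \{S : S \cap R \neq \emptyset\}$ for some $R \in \binom{J}{r}$, since every other monotone junta on $O_r(1)$ variables with $\mh$-measure at most $1-2^{-r}$ either attains a strictly smaller measure or, after intersecting with the slice, falls short of $|\f|$. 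Finally, since $\j^{\uparrow}$ depends only on the $r$ coordinates of $R$, the $\mh$-closeness of $\f^{\uparrow}$ to $\j^{\uparrow}$ transfers to closeness on the slice via concentration of the hypergeometric distribution (losing only an $O_r(1)$ factor), yielding $|\{S \in \f : S \cap R = \emptyset\}| \le 2^r \exp(-\Theta(s^2)) \binom{n-r}{k}$.

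The main difficulty I anticipate is making the junta-approximation quantitatively sharp: to obtain an $\exp(-\Theta(s^2))$ error rather than merely polynomial in $1/s$, one cannot invoke existing junta theorems as a black box, but must exploit both hypotheses simultaneously, likely through a bootstrap that first extracts a rough junta and then refines the error with a sharp hypercontractive / level-$d$ estimate tailored to upsets sitting at their extremal measure. A secondary obstacle is that the regime $n \approx 2k$ is exactly where transference between $\mp$ and $\mh$ is weakest, so the $s\sqrt{k}$ gap between $n$ and $2k$ must be carefully budgeted between the lower-bound step for $\mh(\f^{\uparrow})$, the junta approximation itself, and the final transfer to the slice. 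The auxiliary parameter $t \ge s^2$ likely plays a technical role in the Kruskal--Katona step, controlling the downward shadow of the complement on the slice.
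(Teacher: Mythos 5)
Your proposed first step (pinning $\mh(\f^{\uparrow})$ between $1-2^{-r}-e^{-\Theta(s^2)}$ and $1-2^{-r}$ via Kruskal--Katona plus a Chernoff tail, with $t\ge s^2$ controlling the $\binom{n-r-t}{k-1}$ correction) is sound, and the final transfer back to the slice via concentration is also the right idea. But the middle of the argument has a genuine gap: you assert that ``any monotone family whose $\mh$-measure is very close to the maximum attained among unions of $r$ dictatorships must itself be $\mh$-close to such a union.'' No such theorem exists, and it is false as stated. Monotonicity together with $\mh(\mathcal{A})$ being pinned near $1-2^{-r}$ gives no structural information: a Hamming ball $\{S:|S|\ge m\}$ (or a random monotone family of the right density) can have $\mh$-measure exactly $1-2^{-r}$ and satisfy both your pinning inequalities while being very far in $\mh$ from every junta. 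The upper bound $\mh(\f^{\uparrow})\le 1-2^{-r}$ is a constraint you are handed as a hypothesis, not a consequence of extremality, and by itself it carries no junta information.

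What is actually needed, and what the paper supplies, is a bound on the \emph{total influence} $I^{p'}[\f^{\uparrow}]$ at some intermediate bias $p'$, because the isoperimetric stability theorem (Theorem~\ref{thm:iso-stability}) has small total influence --- not extremal measure --- as its hypothesis. The two measure constraints you derive are used \emph{jointly} through Russo's lemma: since $\mh(\f^{\uparrow})-\mu_p(\f^{\uparrow})=\int_p^{1/2} I^q[\f^{\uparrow}]\,dq$, and the same integral for $\mathrm{OR}_{[r]}$ equals $(1-2^{-r})-(1-(1-p)^r)$, the hypotheses force $\int_p^{1/2}\bigl(I^q[\f^{\uparrow}]-I^q[\mathrm{OR}_{[r]}]\bigr)\,dq\le\eta$, and pigeonhole yields some $p'\in(p,1/2)$ with $I^{p'}[\f^{\uparrow}]\le I^{p'}[\mathrm{OR}_{[r]}]+\eta/(1/2-p)$. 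Since $\mathrm{OR}_{[r]}$ achieves equality in the biased edge-isoperimetric inequality, this says $\f^{\uparrow}$ has near-minimal total influence for its $p'$-measure, which is exactly what Theorem~\ref{thm:iso-stability} (applied to the dual family) converts into closeness to a subcube, whence to $\mathrm{OR}_R$. Your plan never produces an influence bound, so there is no valid way to invoke the isoperimetric theorem; the ``bootstrap'' you gesture at in your final paragraph is not a refinement of a working argument but the missing first step. A correct proof along your lines would have to insert this Russo-type derivative argument between your step~1 and step~2, working at a bias $p$ strictly less than $1/2$ (the paper takes $p=(k/n+1/2)/2$) rather than at $1/2$ alone.
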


Here, for $\f \subset \p([n])$, we write $\f^{\uparrow} : = \{S \subset [n]:\ T \subset S \text{ for some } T \in \f\}$ for the {\em up-closure} of $\f$. For $0 < p < 1$ and $\g \subset \p([n])$, $\mu_p(\g)$ denotes the $p$-biased measure of $\g$, defined in Section \ref{sec:defns} below.

Roughly speaking, our strategy for proving Theorem \ref{thm:stability} is as follows. Instead of working with the uniform measure on ${[n] \choose k}$, we consider the up-closure $\mathcal{F}^{\uparrow}$ of our family $\mathcal{F}$, and we work with the {\em $p$-biased measure} on $\p([n])$, where $p \approx k/n$. It is well-known that $\mu_p(\f^\uparrow)$ approximately bounds $|\f|/{n \choose k}$ from above, for an appropriate choice of $p$. More precisely, we choose $p$ to be slightly larger than $k/n$, and use the lower bound on $|\mathcal{F}|$ to show that $\mu_p(\f^\uparrow) \approx 1-(1-p)^r$. Combined with the fact that $\mu_{1/2}(\f^{\uparrow}) \leq 1-2^{-r}$, this implies an upper bound on the derivative of the function $q \mapsto \mu_q(\f^{\uparrow})$, at some $q \in (p,1/2)$. But by Russo's Lemma, this derivative is precisely $I^q[\f^{\uparrow}]$, the {\em influence} of $\f^{\uparrow}$ with respect to the $q$-biased measure; we deduce that $I^q[\f^\uparrow]$ is close to its minimum possible value. We then use a recent structure theorem for families with small influence (proved in \cite{iso-stability}) to deduce that $\f^{\uparrow}$ must be close (with respect to the $q$-biased measure) to a family of the form $\{S \subset [n]: S \cap R \neq \emptyset\}$, for some $R \in \binom{[n]}{r}$. Finally, we deduce from this that $\mathcal{F}$ is almost contained in a family of the form $\{S \in \binom{[n]}{k}: S \cap R \neq \emptyset\}$. Note that a similar strategy was used to obtain the stability results in \cite{long-paper}; indeed, we use here some of the lemmas from that paper.

We deduce Theorem \ref{thm:main} from Theorem \ref{thm:stability} using a combinatorial `bootstrapping' argument, involving an analysis of cross-intersecting families.

\section{Definitions, notation and tools}
\label{sec:defns}

\subsection*{Definitions and notation}
In this paper, all logarithms are to the base 2. A {\em dictatorship} is a family of the form $\{S \subset [n]:\ j \in S\}$ or $\{S \in \binom{[n]}{k}:\ j \in S\}$ for some $j \in [n]$. For $j \in [n]$, we write $\mathcal{D}_{j}:= \{S \in \binom{[n]}{k}:\ j \in S\}$ for the corresponding dictatorship. If $R \subset [n]$, we write $\mathcal{S}_R : = \{S \subset [n]:\ R \subset S\}$, and we write $\mathrm{OR}_R := \{S\subset [n]:\ S \cap R \neq \emptyset\}$.

 A family $\mathcal{F} \subset \p([n])$ is said to be {\em increasing} (or an {\em up-set}) if it is closed under taking supersets, i.e.\ whenever $A \subset B$ and $A \in \mathcal{F}$, we have $B \in \mathcal{F}$; it is said to be {\em decreasing} (or a {\em down-set}) if it is closed under taking subsets.
 
If $\mathcal{F} \subset \p([n])$ and $l \in [n]$, we write $\mathcal{F}^{(l)} : = \{F \in \mathcal{F}:\ |F|=l\}$. Hence, for example,
$$(\mathrm{OR}_{R})^{(k)} = \{S \in \binom{[n]}{k}:\ S \cap R \neq \emptyset\}.$$

If $\mathcal{F} \subset \p([n])$, we define the {\em dual} family $\f^*$ by $\mathcal{F}^* = \{[n] \setminus A:\ A \notin \f\}$. We denote by $\mathcal{F}^{\uparrow}$ the up-closure of $\mathcal{F}$, i.e.\ the minimal increasing subfamily of $\p([n])$ which contains $\mathcal{F}$.

If $\f \subset \p([n])$ and $C \subset B \subset [n]$, we define $\f_{B}^{C}:= \left\{S\in\mathcal{P}\left(\left[n\right]\setminus B\right)\,:\, S\cup C \in \mathcal{F}\right\}$.

A family $\mathcal{F} \subset \mathcal{P}([n])$ is said to be a {\em subcube} if $\mathcal{F} = \{S \subset [n]:\ S \cap B = C\}$, for some $C \subset B \subset [n]$, and it is said to be an {\em increasing subcube} if $\mathcal{F} = \{S \subset [n]:\ B \subset S\}$, for some $B \subset [n]$.

We say a pair of families $\mathcal{A}, \mathcal{B} \subset \p([n])$ are {\em cross-intersecting} if $A \cap B \neq \emptyset$ for any $A \in \mathcal{A}$ and any $B \in \mathcal{B}$.

If $X$ is a set and $\mathcal{A} \subset X$, we write $1_{\A}$ for the {\em indicator function} of $\mathcal{A}$, i.e., the Boolean function
$$1_{\mathcal{A}}:\ X \to \{0,1\};\quad 1_{\mathcal{A}}(x) = \begin{cases} 1 & \textrm{ if } x \in \mathcal{A};\\ 0 & \textrm{ if } x \notin \mathcal{A}.\end{cases}$$

By identifying $\{0,1\}^n$ with $\p([n])$ in the usual way (identifying a vector $x \in \{0,1\}^n$ with the set $\{i:\ x_i=1\} \subset [n]$), we may identify Boolean functions on $\{0,1\}^n$ with Boolean functions on $\p([n])$, and therefore with subfamilies of $\mathcal{P}([n])$. We will sometimes write Boolean functions on $\{0,1\}^n$ using the AND ($\wedge$) and OR ($\vee$) operators. Hence, for example,
$$f: \{0,1\}^n \to \{0,1\};\quad f(x_1,\ldots,x_n) \mapsto x_1 \vee (x_2 \wedge x_3)$$
corresponds to the subfamily $\{S \subset [n]:\ 1 \in S \textrm{ or } \{2,3\} \subset S\} \subset \p([n])$.

For $p\in [0,1]$, the {\em $p$-biased measure on $\p([n])$} is defined by
$$\mu_p(S) = p^{|S|} (1-p)^{n-|S|}\quad \forall S \subset [n].$$
In other words, we choose a random set by including each $j \in [n]$ independently with probability $p$. For $\f \subset \p([n])$, we define $\mu_p(\f) = \sum_{S \in \f} \mu_p(S)$.

We remark that if $C \subset B \subset [n]$, then $\mu_p(\mathcal{F}_B^C)$ refers to the $p$-biased measure on $\p([n] \setminus B)$, not on $\p([n])$, since we regard $\f_{B}^{C}$ as a subset of $\p([n] \setminus B)$.

If $f:\p([n]) \to \{0,1\}$ is a Boolean function, we define the {\em influence of $f$ in direction $i$} (with respect to $\mu_p$) by
$$\Inf^p_{i}[f] := \mu_p(\{S \subset [n]:\ f(S) \neq f(S\Delta \{i\})\}).$$
We define the {\em total influence} of $f$ (w.r.t. $\mu_p$) by $I^p[f] := \sum_{i=1}^{n} \Inf^p_{i}[f]$.

Similarly, if $\A \subset \p([n]$, we define the {\em influence of $\A$ in direction $i$} (w.r.t. $\mu_p$) by $\Inf^p_i[\A] : = \Inf^p_i[1_{\A}]$, and we define {\em total influence} of $\A$ (w.r.t. $\mu_p$) by $I^p[\A] : = I^p[1_{\A}]$.

\subsection*{Tools}
We will use the following `biased version' of the Erd\H{o}s-Ko-Rado theorem, first obtained by Ahlswede and Katona~\cite{AK77} in 1977.
\begin{thm}
\label{thm:biased-ekr}
Let $0<p\leq1/2$. Let $\f \subset \mathcal{P}([n])$ be an intersecting family. Then $\mu_p(\f) \leq p$. If $p < 1/2$, then equality holds if and only if $\f = \{S \subset [n]:\ j \in S\}$ for some $j \in [n]$.
\end{thm}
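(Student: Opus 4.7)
The plan is to reduce the statement to a level-wise linear-programming bound, using the classical uniform Erd\H{o}s--Ko--Rado theorem (Theorem~\ref{thm:ekr}) together with a complementary-pair counting argument. First I would replace $\f$ by its up-closure $\f^{\uparrow}$: this preserves the intersecting property and can only increase $\mu_p$, so I may assume that $\f$ is an up-set. Writing $g_k := |\f \cap \binom{[n]}{k}|$, I expand
$$
\mu_p(\f) = \sum_{k=0}^{n} g_k \, p^k (1-p)^{n-k},
$$
and record two linear constraints on the $g_k$: (i) for every $k \leq n/2$, the level $\f \cap \binom{[n]}{k}$ is a $k$-uniform intersecting family, so Theorem~\ref{thm:ekr} gives $g_k \leq \binom{n-1}{k-1}$; and (ii) at most one member of each complementary pair $\{A,[n]\setminus A\}$ lies in $\f$ by intersectingness, whence $g_k + g_{n-k} \leq \binom{n}{k}$.

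The main step is to maximise the above sum subject to (i) and (ii) by grouping terms into pairs $(k,n-k)$ with $k<n/2$ (plus a middle level $k=n/2$ when $n$ is even). Within each pair, the coefficient $p^k(1-p)^{n-k}$ of $g_k$ dominates the coefficient $p^{n-k}(1-p)^k$ of $g_{n-k}$ since $p \leq 1/2$ and $k<n-k$, so the LP optimum pushes $g_k$ up to its EKR value $\binom{n-1}{k-1}$ and then sets $g_{n-k} = \binom{n}{k} - \binom{n-1}{k-1} = \binom{n-1}{k}$ by Pascal's rule; the middle level obeys $g_{n/2} \leq \binom{n-1}{n/2-1}$. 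Summing these pairwise maxima yields
$$
\mu_p(\f) \;\leq\; \sum_{k=1}^{n} \binom{n-1}{k-1} p^k (1-p)^{n-k} \;=\; p \sum_{j=0}^{n-1} \binom{n-1}{j} p^j (1-p)^{n-1-j} \;=\; p,
$$
which is precisely $\mu_p(\mathcal{D}_1)$, finishing the inequality.

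For the equality characterisation at $p<1/2$, the coefficient inequality is strict for every $k<n/2$, so the LP has a unique optimum forcing $g_k = \binom{n-1}{k-1}$ for all such $k$; the uniqueness clause of Theorem~\ref{thm:ekr} then forces each level $\f \cap \binom{[n]}{k}$ with $k<n/2$ to be a star $\mathcal{D}_{j_k} \cap \binom{[n]}{k}$, and the up-set structure of $\f$ forces all the $j_k$ to coincide. The main subtlety I expect to need care is the middle level $k=n/2$ when $n$ is even, where EKR equality is not unique; however this is resolved automatically by the up-closure, which forces $\f \cap \binom{[n]}{n/2}$ to contain $\mathcal{D}_j \cap \binom{[n]}{n/2}$ once $j$ has been identified from smaller levels, pinning down $\f = \mathcal{D}_j$.
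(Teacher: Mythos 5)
Your proof is correct (up to small boundary cases), and it is an interesting comparison point because the paper does not actually supply a proof of Theorem~\ref{thm:biased-ekr} at all --- it is quoted from Ahlswede and Katona. Your level-wise LP route (uniform EKR for levels $k<n/2$, the complementary-pair constraint $g_k + g_{n-k} \le \binom{n}{k}$, and the dominance $p^k(1-p)^{n-k} \ge p^{n-k}(1-p)^k$ for $p\le 1/2$) is a legitimate and fairly standard elementary derivation of the biased bound from the uniform one; it does give $\mu_p(\f)\le\sum_k\binom{n-1}{k-1}p^k(1-p)^{n-k}=p$, and the strict coefficient gap at $p<1/2$ does force every $g_k$ to equal $\binom{n-1}{k-1}$ at equality.

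Two small points deserve attention. First, your constraint (i) invokes Theorem~\ref{thm:ekr} ``for every $k\le n/2$'', but as stated in the paper that theorem requires $k<n/2$; the bound $g_{n/2}\le\binom{n-1}{n/2-1}$ for even $n$ must instead come from the complementary-pair constraint (you do handle this in the middle-level remark, but the phrasing of (i) should be tightened). Second, for the equality characterisation your chain ``identify $j_k$ at each level $1\le k<n/2$, then let the up-set structure glue them'' silently assumes there exist such levels, which fails for $n\le 2$; and for even $n$ it also assumes the anchor can be read off from a strictly lower level, which again fails at $n=2$. Both issues evaporate if you observe that the argument can be short-circuited: the LP forces $g_1=1$, so $\f^{(1)}=\{\{j\}\}$ for some $j$, and the up-set property then gives $\f\supseteq\mathcal{D}_j$; since $\mu_p(\f)=p=\mu_p(\mathcal{D}_j)$ and $\mu_p$ assigns positive mass to every set for $0<p<1$, this already yields $\f=\mathcal{D}_j$ without needing EKR uniqueness at the higher levels or any special middle-level discussion. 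With that simplification the edge cases disappear and the proof is complete.
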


We will use the following special case of the well-known inequality of Harris \cite{harris} (which is itself a special case of the FKG inequality \cite{fkg}).
\begin{lem}[Harris]
\label{lem:harris}
Let $0 < p < 1$. Then for any increasing sets $\mathcal{A},\mathcal{B} \subset \mathcal{P}([n])$, $\mu_p(\mathcal{A} \cap \mathcal{B}) \geq \mu_p(\mathcal{A})\mu_p(\mathcal{B})$. The same inequality holds if $\mathcal{A}$ and $\mathcal{B}$ are decreasing.
\end{lem}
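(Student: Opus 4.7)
The plan is to prove Harris's inequality by induction on $n$, the ground set size. The base case $n=1$ is immediate: any increasing $\mathcal{A} \subset \{\emptyset,\{1\}\}$ is one of $\emptyset$, $\{\{1\}\}$, or $\{\emptyset,\{1\}\}$, and in each of the nine pairs $(\mathcal{A},\mathcal{B})$ the inequality $\mu_p(\mathcal{A} \cap \mathcal{B}) \geq \mu_p(\mathcal{A})\mu_p(\mathcal{B})$ reduces to one of $0 \geq 0$, $x \geq x$, or $p \geq p^2$. (More conceptually, two increasing functions on the two-point chain are comonotonic, hence positively correlated.)

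For the inductive step, I would condition on coordinate $n$. Given increasing $\mathcal{A}, \mathcal{B} \subset \p([n])$, set
\[
\mathcal{A}_0 := \{S \subset [n-1] : S \in \mathcal{A}\}, \qquad \mathcal{A}_1 := \{S \subset [n-1] : S \cup \{n\} \in \mathcal{A}\},
\]
and define $\mathcal{B}_0, \mathcal{B}_1$ analogously. Both $\mathcal{A}_0,\mathcal{A}_1$ are increasing subfamilies of $\p([n-1])$, and since $\mathcal{A}$ is increasing we have $\mathcal{A}_0 \subset \mathcal{A}_1$; similarly $\mathcal{B}_0 \subset \mathcal{B}_1$. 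Writing $a_i := \mu_p(\mathcal{A}_i)$ and $b_i := \mu_p(\mathcal{B}_i)$ for the $p$-biased measure on $\p([n-1])$, the product structure of $\mu_p$ gives $\mu_p(\mathcal{A}) = (1-p)a_0 + p a_1$, $\mu_p(\mathcal{B}) = (1-p)b_0 + p b_1$, and
\[
\mu_p(\mathcal{A} \cap \mathcal{B}) = (1-p)\mu_p(\mathcal{A}_0 \cap \mathcal{B}_0) + p\,\mu_p(\mathcal{A}_1 \cap \mathcal{B}_1).
\]
The inductive hypothesis applied on $[n-1]$ gives $\mu_p(\mathcal{A}_i \cap \mathcal{B}_i) \geq a_i b_i$ for $i=0,1$.

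Substituting and expanding, the difference $\mu_p(\mathcal{A}\cap\mathcal{B}) - \mu_p(\mathcal{A})\mu_p(\mathcal{B})$ is bounded below by
\[
(1-p)a_0 b_0 + p a_1 b_1 - \bigl((1-p)a_0 + p a_1\bigr)\bigl((1-p)b_0 + p b_1\bigr) = p(1-p)(a_1 - a_0)(b_1 - b_0),
\]
which is nonnegative since $a_1 \geq a_0$ and $b_1 \geq b_0$ by the monotonicity observations above. This is the key algebraic identity in the proof, and once it is seen there is no real obstacle. For the decreasing case, I would deduce it from the increasing case by duality: if $\mathcal{A},\mathcal{B}$ are decreasing, then $\mathcal{A}^c$ and $\mathcal{B}^c$ are increasing, and applying the inequality to $\mathcal{A}^c,\mathcal{B}^c$ (and using $\mu_p(\mathcal{A}^c) = 1-\mu_p(\mathcal{A})$, together with $\mu_p(\mathcal{A}^c \cup \mathcal{B}^c) = 1 - \mu_p(\mathcal{A} \cap \mathcal{B})$) rearranges to the same inequality for $\mathcal{A},\mathcal{B}$.
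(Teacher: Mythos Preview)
Your proof is correct; this is the standard inductive proof of Harris's inequality, and the algebraic identity $(1-p)a_0b_0 + pa_1b_1 - ((1-p)a_0+pa_1)((1-p)b_0+pb_1) = p(1-p)(a_1-a_0)(b_1-b_0)$ is indeed the heart of the matter. One minor remark on exposition: for the decreasing case you invoke $\mu_p(\mathcal{A}^c \cup \mathcal{B}^c) = 1 - \mu_p(\mathcal{A}\cap\mathcal{B})$, but what you apply directly from the increasing case is the bound on $\mu_p(\mathcal{A}^c \cap \mathcal{B}^c)$; the derivation goes through either via inclusion--exclusion or by writing $\mu_p(\mathcal{A}^c\cap\mathcal{B}^c) = 1 - \mu_p(\mathcal{A}\cup\mathcal{B}) = 1 - \mu_p(\mathcal{A}) - \mu_p(\mathcal{B}) + \mu_p(\mathcal{A}\cap\mathcal{B})$ and rearranging.

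As for comparison with the paper: there is nothing to compare. The paper does not prove Lemma~\ref{lem:harris}; it simply states it as a well-known special case of Harris's inequality (and of the FKG inequality) and cites the original references. So your argument supplies a proof where the paper gives none.
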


By repeatedly applying Lemma \ref{lem:harris}, one immediately obtains the following well-known corollary.
\begin{cor}
\label{cor:harris}
Let $r \in \mathbb{N}$, let $0 < p < 1$, and suppose $\mathcal{A}_1,\ldots,\mathcal{A}_r \subset \p([n])$ are increasing. Then
$$\mu_p(\A_1 \cap \ldots \cap \A_r) \geq \prod_{i=1}^{r} \mu_p(\A_i).$$
The same inequality holds if $\A_1,\ldots,\A_r$ are decreasing.
\end{cor}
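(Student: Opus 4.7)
The plan is to prove this corollary by straightforward induction on $r$, using Lemma \ref{lem:harris} as the two-family case. The base case $r=1$ is a trivial identity, and the case $r=2$ is precisely the statement of Lemma \ref{lem:harris}. For the inductive step, I would assume the inequality holds for $r-1$ increasing families and consider increasing families $\A_1, \ldots, \A_r \subset \p([n])$.

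The key observation is that the intersection of increasing families is itself increasing: if $S \in \A_1 \cap \ldots \cap \A_{r-1}$ and $T \supset S$, then $T \in \A_i$ for every $i < r$ by the monotonicity of each $\A_i$, hence $T$ lies in the intersection. I would therefore apply Lemma \ref{lem:harris} with $\A = \A_1 \cap \ldots \cap \A_{r-1}$ (which is increasing by the above) and $\B = \A_r$, yielding
\[
\mu_p\bigl(\A_1 \cap \ldots \cap \A_r\bigr) \geq \mu_p\bigl(\A_1 \cap \ldots \cap \A_{r-1}\bigr) \, \mu_p(\A_r).
\]
Applying the inductive hypothesis to the first factor on the right-hand side gives $\mu_p(\A_1 \cap \ldots \cap \A_{r-1}) \geq \prod_{i=1}^{r-1}\mu_p(\A_i)$, and combining these inequalities yields the desired product bound.

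For the decreasing case, the argument is entirely symmetric: the intersection of decreasing families is again decreasing (if $S$ lies in each $\A_i$ and $T \subset S$, then $T \in \A_i$ for each $i$), so the same inductive scheme applies with the decreasing version of Lemma \ref{lem:harris} at each step. There is no real obstacle here; the only point requiring a moment's care is the verification that monotonicity is preserved under intersection, which as noted is immediate from the definitions. The entire proof should take just a few lines.
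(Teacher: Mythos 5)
Your proposal is correct and is exactly what the paper intends: the paper states that the corollary follows "by repeatedly applying Lemma \ref{lem:harris}," which is precisely your induction on $r$, with the (correctly verified) observation that an intersection of increasing (resp.\ decreasing) families is increasing (resp.\ decreasing).
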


The following `biased isoperimetric inequality' is well-known; it appears for example in \cite{Kahn-Kalai}.
\begin{thm}
\label{thm:skewed-iso}
If $0 < p < 1$ and $\mathcal{A} \subset \p([n])$ is increasing, then
\begin{equation} \label{eq:skewed-iso} pI^p[\A] \geq \mu_p(\A) \log_p (\mu_p(\A)).\end{equation}
\end{thm}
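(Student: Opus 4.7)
The plan is to prove the inequality by induction on $n$, following the standard blueprint for edge-isoperimetric inequalities on the discrete cube. The base case $n=1$ is immediate: the only nontrivial increasing family is $\A = \{\{1\}\}$, for which $I^p[\A] = 1$, $\mu_p(\A) = p$, so $p I^p[\A] = p$ and $\mu_p(\A) \log_p \mu_p(\A) = p \log_p p = p$; the remaining increasing families $\emptyset$ and $\p([1])$ make both sides equal to $0$, using the convention $0 \log 0 = 0$.

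For the inductive step, I would decompose $\A$ along the last coordinate: set
$$\A_0 := \{S \subset [n-1]:\ S \in \A\}, \qquad \A_1 := \{S \subset [n-1]:\ S \cup \{n\} \in \A\}.$$
Both are increasing subfamilies of $\p([n-1])$ with $\A_0 \subset \A_1$ (since $\A$ is increasing). Write $\alpha := \mu_p(\A_0)$, $\beta := \mu_p(\A_1)$, and $\mu := \mu_p(\A) = (1-p)\alpha + p\beta$. One has $\Inf_n^p[\A] = \beta - \alpha$, and
$$I^p[\A] = (1-p) I^p[\A_0] + p I^p[\A_1] + (\beta - \alpha).$$
Applying the inductive hypothesis to $\A_0$ and $\A_1$ on $\p([n-1])$ reduces the desired inequality $pI^p[\A] \geq \mu \log_p \mu$ to the purely real-analytic inequality
$$(1-p)\alpha \log_p \alpha + p\beta \log_p \beta + p(\beta - \alpha) \;\ge\; \mu \log_p \mu \qquad (0 \le \alpha \le \beta \le 1).$$

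To verify this, let $h(\alpha,\beta)$ be the difference of the two sides. Then $h(\alpha,\alpha) = 0$ identically, and a direct computation (using $\frac{d}{dx}(x\log_p x) = \log_p x + 1/\log p$ and $\partial \mu/\partial \beta = p$) gives
$$\frac{\partial h}{\partial \beta} \;=\; p\bigl(\log_p(\beta/\mu) + 1\bigr).$$
Since $\alpha \in [0,\beta]$, one has $p\beta \le \mu \le \beta$, so $\beta/\mu \in [1, 1/p]$ and therefore $\log_p(\beta/\mu) \in [-1, 0]$. Hence the partial derivative is non-negative, and integrating upward from $\beta = \alpha$ yields $h \ge 0$, completing the induction.

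The main obstacle is simply locating the right real-variable inequality; once isolated, it is very mild, amounting to saying that the Jensen defect of the convex function $x \mapsto x \log x$ is large enough to absorb the linear correction $p(\beta - \alpha)$. An alternative route would be to invoke Gross's log-Sobolev inequality for the single-bit measure $\mu_p$ combined with tensorization of entropy, but the direct induction above is shorter and avoids any constant-optimization.
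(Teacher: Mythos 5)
Your proof is correct. It is worth noting that the paper itself does not supply a proof of Theorem~\ref{thm:skewed-iso}: the theorem is stated as well known and referenced to Kahn--Kalai \cite{Kahn-Kalai}, so there is no argument in the paper to compare against. What you have written is a clean, self-contained inductive proof of the cited result. The decomposition along the last coordinate is sound, the identity $I^p[\A] = (1-p)I^p[\A_0] + p I^p[\A_1] + (\beta - \alpha)$ is the standard one (and relies, correctly, on $\A_0 \subset \A_1$ to get $\Inf_n^p[\A] = \beta - \alpha$ rather than $|\beta-\alpha|$), and the reduction to the two-variable inequality
\[
(1-p)\alpha \log_p \alpha + p\beta \log_p \beta + p(\beta - \alpha) \;\ge\; \bigl((1-p)\alpha + p\beta\bigr)\log_p\bigl((1-p)\alpha + p\beta\bigr), \qquad 0 \le \alpha \le \beta \le 1,
\]
is exactly right. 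Your verification of that inequality is also correct: $h(\alpha,\alpha)=0$, and the partial derivative $\partial h/\partial \beta = p\bigl(\log_p(\beta/\mu)+1\bigr)$ is non-negative precisely because $\mu = (1-p)\alpha + p\beta \in [p\beta,\beta]$ forces $\log_p(\beta/\mu) \in [-1,0]$; the additive constant $1/\ln p$ appearing in $\tfrac{d}{dx}(x\log_p x)$ cancels between the $\beta$ and $\mu$ terms, so your slightly loose notation $1/\log p$ is harmless. The boundary cases ($\alpha = 0$, $\alpha=\beta$, $\beta=0$) all reduce to equalities, consistent with the fact that equality in the theorem is attained by subcubes. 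In short: a correct and self-contained proof, by the standard compression/induction blueprint, of a lemma the paper only quotes.
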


We will need the following `stability' version of Theorem \ref{thm:skewed-iso}, proved by Keller and the authors in \cite{iso-stability}.
\begin{thm}
\label{thm:iso-stability}
For each $\eta>0$, there exist $C_1 = C_1(\eta),\ c_0 = c_0(\eta)>0$ such that the following holds. Let $0<p\leq 1-\eta$, and let $0 \leq \epsilon \leq c_{0}/\ln(1/p)$. Let $\mathcal{A} \subset \p([n])$ be an increasing family such that
$$pI^p[\A] \leq \mu_{p}(\A) \left(\log_{p}\left(\mu_{p}(\A) \right)+\epsilon\right).$$
Then there exists an increasing subcube $\mathcal{C} \subset \p([n])$ such that
$$\mu_p(\A \Delta \mathcal{C}) \leq \frac{C_1\epsilon\ln(1/p)}{\ln\left(\frac{1}{\epsilon\ln(1/p)}\right)}\mu_p(\A).$$
\end{thm}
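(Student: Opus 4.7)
My plan is to prove Theorem \ref{thm:iso-stability} by first establishing that families with small isoperimetric deficit are approximately juntas on few coordinates, and then analysing how juntas can nearly saturate Theorem \ref{thm:skewed-iso}.

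Define the \emph{isoperimetric deficit} $\delta(\A) := pI^p[\A] - \mu_p(\A)\log_p \mu_p(\A)$; by hypothesis $\delta(\A) \leq \epsilon \mu_p(\A)$. The extremal families of Theorem \ref{thm:skewed-iso} are precisely the increasing subcubes $\{S : R \subset S\}$, so the task is quantitative stability around these families. I would first reduce to the case where $\mu_p(\A) \in [p^{r+1}, p^r]$ for some nonnegative integer $r$: Corollary \ref{cor:harris} (applied to the putative target subcube against its complement, combined with Theorem \ref{thm:biased-ekr} via duality in the intersecting regime) shows that these are the natural ranges, and the target subcube will have rank $r$.

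The core step is a biased Friedgut-style junta theorem: if $I^p[\A]$ is within $\epsilon \mu_p(\A)$ of its isoperimetric lower bound, then for every $\tau > 0$ there exists a set $J \subset [n]$ with $|J| \leq \exp(O(r/(\tau \ln(1/p))))$ and an increasing $J$-junta $\A'$ with $\mu_p(\A \Delta \A') \leq \tau \mu_p(\A)$. The proof uses $p$-biased hypercontractivity (the Bonami--Beckner inequality in the biased setting, whose norm-ratio constant degenerates like $1/\ln(1/p)$ for small $p$): one places a coordinate $i$ in $J$ iff $\Inf_i^p[\A] \geq \tau'$ for an appropriate threshold, and hypercontractivity controls the Fourier weight lost by conditioning on the coordinates in $J$. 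Once the junta $\A'$ is extracted, a finite-dimensional argument on $J$ --- iteratively picking the coordinate of largest influence in $\A'$ and either adding it to the subcube set $R$ or charging its failure to do so against the deficit --- identifies an increasing subcube $\mathcal{C}$ of rank $r$ with $\mu_p(\A' \Delta \mathcal{C})$ small, hence $\mu_p(\A \Delta \mathcal{C})$ small.

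The hard part will be optimising $\tau$ and tracking constants to recover the exact rate $C_1 \epsilon \ln(1/p)/\ln(1/(\epsilon\ln(1/p)))$ in the conclusion: a naive choice of $\tau$ gives only a polynomial $O(\epsilon^c)$ bound, and one must exploit the precise dependence of biased hypercontractivity on $p$ together with careful accounting of how each high-influence coordinate contributes to the deficit. A secondary obstacle is the regime where $\mu_p(\A)$ is very close to $0$ or $1$, where $\delta(\A)$ behaves degenerately and the reduction to a rank-$r$ subcube must be handled with care.
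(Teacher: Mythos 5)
The paper does not prove Theorem \ref{thm:iso-stability} at all: it is quoted as a black box from the companion paper [iso-stability] (arXiv:1702.01675), so there is no in-text proof to compare your attempt against. What you have written is a research plan rather than a proof, and it follows a different route from the one actually used in [iso-stability], whose argument is a direct inductive/bootstrapping one working with links $\A_{\{i\}}^{\{i\}},\,\A_{\{i\}}^{\varnothing}$ and influences, not one that passes through $p$-biased hypercontractivity and a Friedgut-style junta theorem.

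Evaluated on its own terms, the sketch has two genuine gaps. First, the passage from ``$\A$ is $\tau$-close to an increasing $J$-junta $\A'$'' to ``$\A'$ is close to an increasing subcube'' is exactly where all the difficulty lies, and you leave it as a ``finite-dimensional argument on $J$'' that iteratively picks high-influence coordinates and ``charges failure against the deficit.'' But $|J|$ in the biased Friedgut theorem is of order $\exp\bigl(O(r/(\tau\ln(1/p)))\bigr)$, and for the accuracy $\tau\approx\epsilon\ln(1/p)$ you ultimately need, that is exponentially large in $1/\epsilon$; so this ``finite-dimensional'' step carries essentially the full content of the theorem and cannot be waved through. Second --- and you concede this yourself --- a naive choice of parameters gives only an $O(\epsilon^{c})$ bound, whereas the statement requires a linear-in-$\epsilon$ rate with an additional gain of $1/\ln\!\left(\frac{1}{\epsilon\ln(1/p)}\right)$. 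That logarithmic improvement is not cosmetic: it is exactly what is exploited in Lemma \ref{lem:stab-iso} (where one feeds in $\epsilon$ of the same order as the measure defect $\delta$ and still needs the symmetric-difference bound to be $O(\epsilon+\delta)$ rather than $O((\epsilon+\delta)\log\text{-factors})$). Nothing in the proposal indicates how the junta route could recover this sharpness, and I do not believe it can without importing the very bootstrapping machinery it is meant to replace.
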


We will need the well-known lemma of Russo \cite{Russo}, which relates the derivative of the function $p \mapsto \mu_p(A)$ to the total influence $I^p(A)$, where $A \subset \{0,1\}^n$ is increasing.
\begin{lem}[Russo's lemma]\label{lem:russo}
Let $\A \subset \p([n])$ be increasing, and let $0 < p_0 < 1$. Then
$$\frac{d \mu_p(\A)}{dp} \Big|_{p=p_0} = I^{p_0}[\A].$$
\end{lem}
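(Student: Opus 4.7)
The plan is to prove the identity by differentiating the polynomial expression for $\mu_p(\A)$ and then grouping the result coordinate by coordinate. Since $\mu_p(\A) = \sum_{S \in \A} p^{|S|}(1-p)^{n-|S|}$ is a polynomial in $p$, I would differentiate term by term, obtaining
\[
\frac{d\,\mu_p(\A)}{dp} = \sum_{S \in \A}\Bigl(|S|\, p^{|S|-1}(1-p)^{n-|S|} - (n-|S|)\, p^{|S|}(1-p)^{n-|S|-1}\Bigr),
\]
and then rewrite $|S| = \sum_{i \in S} 1$ and $n-|S| = \sum_{i \notin S} 1$ so as to swap the order of summation, expressing the derivative as $\sum_{i=1}^n D_i(p)$, where $D_i(p)$ collects the contribution coming from coordinate $i$.

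For each fixed $i$ I would reparameterise by $T := S \setminus \{i\} \subset [n]\setminus\{i\}$, noting that both sums inside $D_i(p)$ carry the common weight $p^{|T|}(1-p)^{n-1-|T|}$, so that $D_i(p)$ collapses to
\[
D_i(p) = \sum_{T \subset [n]\setminus\{i\}}\bigl(1_{\A}(T\cup\{i\}) - 1_{\A}(T)\bigr)\, p^{|T|}(1-p)^{n-1-|T|}.
\]
The hypothesis that $\A$ is increasing now enters decisively: whenever $T \in \A$ we also have $T \cup \{i\} \in \A$, so the bracket vanishes, and only the pivotal $T$'s (those with $T \notin \A$ but $T \cup \{i\} \in \A$) contribute to the sum.

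It remains to identify this expression with $\Inf^p_i[\A]$. For each such pivotal pair $\{T,\,T\cup\{i\}\}$, both sets lie in the pivotal set $\{S : 1_{\A}(S) \neq 1_{\A}(S \Delta \{i\})\}$, and their combined $\mu_p$-mass equals $p^{|T|}(1-p)^{n-|T|} + p^{|T|+1}(1-p)^{n-|T|-1} = p^{|T|}(1-p)^{n-1-|T|}$, which is exactly the weight appearing in $D_i(p)$. Hence $D_i(p) = \Inf^p_i[\A]$ for every $i$, and summing over $i$ and evaluating at $p = p_0$ yields the claimed identity. The whole argument is a routine calculation; the only mild subtlety is bookkeeping the derivative cleanly as a sum over ``edges'' of the discrete cube, after which monotonicity of $\A$ produces the desired cancellations immediately.
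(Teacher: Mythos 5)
Your proof is correct, and it is the standard argument for Russo's lemma: differentiate the polynomial $\mu_p(\A)$ term by term, regroup by coordinate to obtain $\sum_i D_i(p)$ with $D_i(p)=\sum_{T\subset[n]\setminus\{i\}}\bigl(1_\A(T\cup\{i\})-1_\A(T)\bigr)p^{|T|}(1-p)^{n-1-|T|}$, and then use monotonicity to see that the bracket is the indicator of $T$ being pivotal, whose weight (summed over the edge $\{T,T\cup\{i\}\}$) matches the $\mu_p$-mass contributing to $\Inf_i^p[\A]$. The paper itself does not prove this lemma --- it simply cites Russo (1982) --- so there is no alternate route to compare against; your derivation is the textbook one, and the monotonicity hypothesis enters exactly where it should, namely to identify the signed derivative $D_i(p)$ with the unsigned influence.
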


We need the following lemma from \cite{long-paper}, which follows from Russo's lemma and Theorem \ref{thm:skewed-iso}.
\begin{lem}
\label{lem:mono}
If $\mathcal{A} \subset \p([n])$ is increasing, then the function $p \mapsto \log_p(\mu_p(\A))$ is monotone non-increasing on $(0,1)$.
\end{lem}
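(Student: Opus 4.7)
The plan is to reduce the monotonicity of $g(p) := \log_p(\mu_p(\mathcal{A}))$ to the skewed isoperimetric inequality (Theorem \ref{thm:skewed-iso}), by differentiating $g$ directly and invoking Russo's lemma (Lemma \ref{lem:russo}) to rewrite $\tfrac{d}{dp}\mu_p(\mathcal{A})$ as the total influence $I^p[\mathcal{A}]$. We may assume $\mathcal{A}$ is nonempty, since otherwise the statement is vacuous; as $\mathcal{A}$ is then increasing and nonempty, $\mu_p(\mathcal{A}) > 0$ for all $p \in (0,1)$, so $g$ is well-defined and differentiable there.

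Write $f(p) := \mu_p(\mathcal{A})$, so that $g(p) = \ln f(p) / \ln p$ (any fixed base of logarithm would do; we use natural $\ln$ for concreteness). The quotient rule yields
\[
g'(p) \;=\; \frac{1}{p\, f(p)\, (\ln p)^{2}}\Bigl( p\, f'(p)\, \ln p \;-\; f(p)\, \ln f(p) \Bigr).
\]
The prefactor $1/(p\, f(p)\, (\ln p)^{2})$ is strictly positive on $(0,1)$, so $g'(p) \leq 0$ is equivalent to $p\, f'(p)\, \ln p \leq f(p)\, \ln f(p)$. Because $\ln p < 0$ for $p \in (0,1)$, dividing through by $\ln p$ reverses this inequality and yields
\[
p\, f'(p) \;\geq\; \frac{f(p)\, \ln f(p)}{\ln p} \;=\; f(p)\, \log_{p} f(p).
\]
By Russo's lemma, $f'(p) = I^{p}[\mathcal{A}]$, so this is exactly $p\, I^{p}[\mathcal{A}] \geq \mu_{p}(\mathcal{A})\, \log_{p}(\mu_{p}(\mathcal{A}))$, which is the content of Theorem \ref{thm:skewed-iso}. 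Hence $g'(p) \leq 0$ throughout $(0,1)$, proving the lemma.

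There is no substantial obstacle: the argument is a one-line computation once Russo's lemma and the skewed isoperimetric inequality are in hand. The only points requiring a moment of care are the sign of $\ln p$ when dividing, and the handling of the degenerate case $\mathcal{A} = \emptyset$.
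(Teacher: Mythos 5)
Your proof is correct and follows exactly the route the paper indicates: the paper states that Lemma~\ref{lem:mono} ``follows from Russo's lemma and Theorem~\ref{thm:skewed-iso}'', and your computation of $g'(p)$ via the quotient rule, combined with Russo's lemma to identify $f'(p)=I^p[\mathcal{A}]$ and the sign flip from dividing by $\ln p<0$, shows precisely that $g'\leq 0$ is equivalent to the biased isoperimetric inequality. The handling of the degenerate cases ($\mathcal{A}=\emptyset$, $\mu_p(\mathcal{A})=1$) is fine, and no gap remains.
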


We will also need the following Chernoff bound.

\begin{lem}
\label{lem:chernoff-small}
Let $n \in \mathbb{N}$, let $0 < \delta,p <1$ and let $X \sim \textrm{Bin}(n,p)$. Then
\begin{equation}\label{eq:chernoff-small} \Pr[X \leq (1-\delta)np] < e^{-\delta^2np/2}.\end{equation}
\end{lem}

The following lemma (combined with the Chernoff bound (\ref{eq:chernoff-small})) will allow us to bound $|\g|/{n \choose k}$ from above in terms of $\mu_p(\g^{\uparrow})$, where $\g \subset \binom{[n]}{k}$ and $p$ is slightly larger than $k/n$.
\begin{lem}
\label{lem:going-up}
Let $k,n \in \mathbb{N}$, let $0 < \alpha,p < 1$ and let $\g \subset \binom{\left[n\right]}{k}$ be a family with $\left|\g\right|=\alpha\binom{n}{k}$.
Then 
\[
\mu_{p}\left(\g^{\uparrow}\right)\ge\alpha\Pr\left[\mathrm{Bin}\left(n,p\right) \geq k\right].
\]
 \end{lem}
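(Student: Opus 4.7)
The plan is to condition on the size of a $\mu_p$-random set and then use a standard double-counting argument showing that the density of an up-closure $\g^{\uparrow}$ on level $m$ is at least its density on level $k$, for every $m \geq k$.

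First I would write
\[
\mu_p(\g^{\uparrow}) \;=\; \sum_{m=0}^{n} \Pr[\mathrm{Bin}(n,p)=m]\cdot \frac{|\g^{\uparrow} \cap \binom{[n]}{m}|}{\binom{n}{m}},
\]
using the fact that under $\mu_p$, the conditional distribution of a random set given its cardinality is uniform. So it suffices to show that for every $m \ge k$,
\[
\frac{|\g^{\uparrow} \cap \binom{[n]}{m}|}{\binom{n}{m}} \;\ge\; \alpha,
\]
since then truncating the sum to $m \ge k$ and factoring out $\alpha$ yields the desired bound $\alpha \Pr[\mathrm{Bin}(n,p) \geq k]$.

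To prove this level-wise density inequality, I would double-count the pairs $(A,B)$ with $A \in \g$, $B \in \g^{\uparrow} \cap \binom{[n]}{m}$, and $A \subset B$. Every $A \in \g$ has exactly $\binom{n-k}{m-k}$ supersets of size $m$ in $[n]$, and by definition of the up-closure each such superset lies in $\g^{\uparrow}$; this gives the lower bound $|\g|\binom{n-k}{m-k}$ on the number of pairs. On the other hand each $B$ on level $m$ contains exactly $\binom{m}{k}$ subsets of size $k$, giving the upper bound $|\g^{\uparrow} \cap \binom{[n]}{m}|\binom{m}{k}$. Combining the two bounds and using the identity $\binom{n}{m}\binom{m}{k} = \binom{n}{k}\binom{n-k}{m-k}$ yields exactly $|\g^{\uparrow} \cap \binom{[n]}{m}|/\binom{n}{m} \ge |\g|/\binom{n}{k} = \alpha$.

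There is no serious obstacle here: both steps are elementary, and no extremal-combinatorial input (such as Kruskal--Katona) is needed, because we only require the trivial shadow-style inequality above rather than a tight one. The only thing to be careful about is that the dropped terms (for $m<k$) are nonnegative, which is immediate, and that the double-count uses the correct identity between binomial coefficients.
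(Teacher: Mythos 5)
Your proof is correct and follows essentially the same route as the paper: you condition on the size of a $\mu_p$-random set and use the level-wise density inequality $|(\g^{\uparrow})^{(m)}|/\binom{n}{m} \geq |\g|/\binom{n}{k}$ for $m \geq k$. The paper simply cites this as the local LYM inequality, whereas you supply the short double-counting proof of it; otherwise the two arguments are identical.
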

\begin{proof}
For each $l\ge k$, the local LYM inequality (see e.g.\ \cite[\S 5]{bollobas-book}) implies that $|(\g^{\uparrow})^{\left(l\right)}|/{n \choose l} \ge |\mathcal{G}|/{n \choose k} = \alpha$.
Hence,
\begin{align*}
\mu_{p}\left(\g^{\uparrow}\right) & \ge\sum_{l=k}^{n}p^{l}\left(1-p\right)^{n-l}\alpha\binom{n}{l}\\
 & =\alpha\Pr\left[\mathrm{Bin}\left(n,p\right) \geq k\right],
\end{align*}
as required.
\end{proof}

Finally, we need the following immediate consequence of a lemma of Hilton (see \cite{Frankl87}).
\begin{lem}
\label{lem:hilton}
Let $n,k,l,t \in \mathbb{N}$ with $k+l \leq n$. Let $\mathcal{A} \subset \binom{[n]}{k},\ \mathcal{B} \subset \binom{[n]}{l}$ be cross-intersecting families. If $|\mathcal{A}| \geq {n \choose k} - {n-t \choose k}$, then $|\mathcal{B}| \leq {n-t \choose l-t}$.
\end{lem}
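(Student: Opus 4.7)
The plan is to translate the hypothesis on $|\mathcal{A}|$ into a size bound on the complementary family, and then extract a bound on $|\mathcal{B}|$ by a Kruskal--Katona shadow argument applied to the complements of the sets in $\mathcal{B}$. Set $\overline{\mathcal{A}} := \binom{[n]}{k}\setminus \mathcal{A}$, so the hypothesis $|\mathcal{A}| \geq \binom{n}{k}-\binom{n-t}{k}$ becomes $|\overline{\mathcal{A}}| \leq \binom{n-t}{k}$. The cross-intersecting hypothesis forces, for every $B \in \mathcal{B}$, the whole of $\binom{[n]\setminus B}{k}$ to lie inside $\overline{\mathcal{A}}$: otherwise some $A \in \mathcal{A}$ would be disjoint from $B$.

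To exploit this, I would pass to complements by setting $\mathcal{B}' := \{[n]\setminus B : B \in \mathcal{B}\} \subset \binom{[n]}{n-l}$ (so $|\mathcal{B}'|=|\mathcal{B}|$) and considering the iterated lower $k$-shadow
\[
\partial^{(k)} \mathcal{B}' := \left\{S \in \binom{[n]}{k} : S \subseteq B' \text{ for some } B' \in \mathcal{B}'\right\}.
\]
The observation above gives $\partial^{(k)} \mathcal{B}' \subseteq \overline{\mathcal{A}}$, so $|\partial^{(k)} \mathcal{B}'| \leq \binom{n-t}{k}$. Kruskal--Katona, in its iterated-shadow form, then implies that any family $\mathcal{C} \subset \binom{[n]}{n-l}$ of size $\binom{x}{n-l}$ (for real $x \geq n-l$) satisfies $|\partial^{(k)} \mathcal{C}| \geq \binom{x}{k}$; applying this with $x = n-t$ yields $|\mathcal{B}| = |\mathcal{B}'| \leq \binom{n-t}{n-l} = \binom{n-t}{l-t}$, which is the desired bound.

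There is essentially no serious obstacle here; the proof is truly immediate once the right duality is in place, which is presumably why the lemma is described as a direct consequence of Hilton's cross-intersecting lemma (itself a Kruskal--Katona corollary). The only routine checks are that $k+l\leq n$ ensures $n-l \geq k$, so that the iterated shadow lands at a meaningful level, and that the extremal colex-initial family $\binom{[n-t]}{n-l}$ realises the equality case, with its $k$-shadow being exactly $\binom{[n-t]}{k}$ of size $\binom{n-t}{k}$.
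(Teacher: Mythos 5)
Your proof is correct. The paper does not actually write out a proof of this lemma --- it merely cites it as ``an immediate consequence of a lemma of Hilton (see [Frankl87])'' --- so there is no in-paper argument to compare against, but the route you take (pass to complements, note that cross-intersection forces the $k$-shadow of $\{[n]\setminus B : B\in\mathcal{B}\}$ to sit inside $\binom{[n]}{k}\setminus\mathcal{A}$, then apply the Lov\'asz form of Kruskal--Katona to bound $|\mathcal{B}|$) is precisely the standard proof of Hilton's cross-intersecting lemma and is the argument implicit in the citation. Two small bookkeeping points worth being explicit about if this were written up: the inequality $|\partial^{(k)}\mathcal{B}'|\geq\binom{x}{k}$ forces $x\leq n-t$ (by strict monotonicity of $x\mapsto\binom{x}{k}$ on $x\geq k$), which then gives $|\mathcal{B}'|=\binom{x}{n-l}\leq\binom{n-t}{n-l}=\binom{n-t}{l-t}$ --- ``applying this with $x=n-t$'' is a slightly loose way to phrase that contrapositive; and the hypothesis $k+l\leq n$ guarantees both that $n-l\geq k$ (so the iterated shadow reaches level $k$) and that the degenerate cases $t>l$ or $n-t<k$ force $\mathcal{B}=\emptyset$, consistent with interpreting out-of-range binomials as zero.
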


\section{Proofs of the main results}

Our first aim is to prove Theorem \ref{thm:stability}; for this, we need some preliminary lemmas.
\begin{lem}
\label{lem:Chernoff} Let $s >0$ and let $t \in \mathbb{N}$ with $t \geq s^2 k/n$. Let $n,k \in \mathbb{N}$ with $n\ge2k+s\sqrt{k}$, and let $p = \tfrac{k/n+0.5}{2}$. If $\f\subset \binom{\left[n\right]}{k}$ with $\left|\f\right|\ge\binom{n}{k}-\binom{n-r}{k}-\binom{n-r-t}{k-1}$, then
\[
\mu_{p}\left(\f^{\uparrow}\right)\ge1-(1-p)^r-\exp(-\Omega(s^2 k/n)).
\]
\end{lem}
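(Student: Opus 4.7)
The plan is to pass to the complementary family $\mathcal{G} := \mathcal{P}([n]) \setminus \mathcal{F}^{\uparrow}$ (which is decreasing) and show that $\mu_p(\mathcal{G}) \leq (1-p)^r + \exp(-\Theta(s^2))$. Since $\mathcal{F} \subset \binom{[n]}{k}$, the family $\mathcal{F}^{\uparrow}$ contains no sets of size below $k$, so $\mathcal{G}^{(l)} = \binom{[n]}{l}$ for $l < k$. For $l \geq k$, a set $S$ lies in $\mathcal{G}^{(l)}$ if and only if every $k$-subset of $S$ lies in $\mathcal{F}^c := \binom{[n]}{k} \setminus \mathcal{F}$, so the $k$-shadow of $\mathcal{G}^{(l)}$ is contained in $\mathcal{F}^c$; by hypothesis, $|\mathcal{F}^c| \leq \binom{n-r}{k} + \binom{n-r-t}{k-1}$.

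The central step is to invoke the Kruskal--Katona theorem in its cascade form. Since $n-r > n-r-t$, the number $\binom{n-r}{k} + \binom{n-r-t}{k-1}$ is already in canonical $k$-cascade representation, so inverting Kruskal--Katona yields
\[
|\mathcal{G}^{(l)}| \leq \binom{n-r}{l} + \binom{n-r-t}{l-1} \qquad (l \geq k).
\]
Plugging this into $\mu_p(\mathcal{G}) = \sum_l |\mathcal{G}^{(l)}| p^l (1-p)^{n-l}$ and factoring $(1-p)^r$ out of the $\binom{n-r}{l}$ contribution (via $(1-p)^{n-l} = (1-p)^{(n-r)-l}(1-p)^r$) and $p(1-p)^{r+t-1}$ out of the $\binom{n-r-t}{l-1}$ contribution (after the substitution $l' = l-1$) produces
\[
\mu_p(\mathcal{G}) \leq \Pr[\mathrm{Bin}(n,p) < k] + (1-p)^r + p(1-p)^{r+t-1}.
\]

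It remains to bound the two error terms. For the Chernoff term, $np - k = (n-2k)/4 \geq s\sqrt{k}/4$, and a short case split (on $n \leq 4k$ versus $n > 4k$, using $s^2 \leq t \leq n$ in the latter) shows that $(np-k)^2/(2np) = \Omega(s^2)$, so (\ref{eq:chernoff-small}) gives $\Pr[\mathrm{Bin}(n,p) < k] \leq \exp(-\Theta(s^2))$. For the final term, $k/n \leq 1/2$ forces $p \geq 1/4$, so $1-p \leq 3/4$, and then $t \geq s^2$ gives $p(1-p)^{r+t-1} \leq (3/4)^{s^2} = \exp(-\Theta(s^2))$. Combining, $\mu_p(\mathcal{F}^\uparrow) = 1 - \mu_p(\mathcal{G}) \geq 1 - (1-p)^r - \exp(-\Theta(s^2))$.

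The main obstacle is insisting on the cascade form of Kruskal--Katona rather than its simpler Lov\'asz real-$x$ version: the latter gives only $|\mathcal{G}^{(l)}| \leq \binom{y}{l}$ with $\binom{y}{k} = |\mathcal{F}^c|$, where the excess $y - (n-r)$ can be $\Theta(1)$ when $k/n$ is far from $1/2$. This costs a factor of $(1-p)^{-\Theta(1)}$ in the main term, producing a bound too weak to match the desired $(1-p)^r$; using Lemma \ref{lem:going-up} directly has the same defect, since $\binom{n-r}{k}/\binom{n}{k} \approx (1-k/n)^r$ is strictly larger than $(1-p)^r$ for $k/n < 1/2$.
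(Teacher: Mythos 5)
Your proof is correct and follows essentially the same route as the paper's: the paper applies the upper-shadow form of Kruskal--Katona directly to bound $|(\f^{\uparrow})^{(l)}|$ below by the level sets of $x_1\vee\cdots\vee x_{r-1}\vee(x_r\wedge(x_{r+1}\vee\cdots\vee x_{r+t}))$ and then sums, whereas you dualize to bound $|\mathcal{G}^{(l)}|$ for the complementary down-set from above by $\binom{n-r}{l}+\binom{n-r-t}{l-1}$ --- but these are the same inequality after taking complements, and the resulting $\mu_p$-computation is identical. The error-term analysis (Chernoff for the $l<k$ mass using $np-k=(n-2k)/4\ge s\sqrt{k}/4$, and $t\ge s^2$ together with $1-p\le 3/4$ for the cascade tail) also coincides with the paper's.
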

\begin{proof}
The Kruskal-Katona Theorem implies that 
\begin{align*}
\left(\f^{\uparrow}\right)^{\left(l\right)} & \ge\binom{n}{l}-\binom{n-r}{l}-\binom{n-r-t}{l-1}\\
 & =|\left(x_{1}\vee x_2 \vee \ldots \vee x_{r-1} \vee \left(x_{r}\wedge\left(x_{r+1}\vee x_{r+2}\vee\cdots\vee x_{r+t}\right)\right)\right)^{\left(l\right)}|
\end{align*}
 for any $l\ge k$. It follows that 
\begin{align*}
\mu_{p}\left(\f^{\uparrow}\right) & \ge\mu_{p}\left(x_{1}\vee x_2 \vee \ldots \vee x_{r-1} \vee \left(x_{r}\wedge\left(x_{r+1}\vee x_{r+2}\vee\cdots\vee x_{r+t}\right)\right)\right)\\
&-\Pr\left[\mathrm{Bin}\left(n,p\right)<k\right]\\
 & =1-(1-p)^r-p(1-p)^{r+t-1}-\Pr\left[\mathrm{Bin}\left(n,p\right)<k\right].
\end{align*}
 The Chernoff bound in Lemma \ref{lem:chernoff-small} (applied with $\delta = 1-k/(np) = \Omega(s\sqrt{k}/n)$), together with our condition on $t$, completes the proof. \end{proof}
\begin{lem}
\label{lem:russo}
Let $r,n \in \mathbb{N}$, let $0 < p < 1/2$ and let $0 < \eta < 1$. If $\mathcal{A} \subset \p([n])$ is increasing with $\mu_{1/2}(\mathcal{A}) \leq 1-2^{-r}$ and
$$\mu_{p}\left(\mathcal{A}\right)\ge1-(1-p)^r-\eta,$$
then there exists $p'\in\left(p,\frac{1}{2}\right)$
such that
$$I^{p'}\left[\mathcal{A}\right]\le I^{p'}\left[x_{1}\vee \ldots \vee x_{r}\right]+\frac{\eta}{0.5-p}.$$
\end{lem}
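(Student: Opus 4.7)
The plan is to use Russo's lemma together with a simple averaging (mean value) argument on the interval $(p, 1/2)$. By Russo's lemma applied to $\mathcal{A}$, we have
$$\int_{p}^{1/2} I^q[\mathcal{A}] \, dq = \mu_{1/2}(\mathcal{A}) - \mu_p(\mathcal{A}).$$
Using the hypothesis $\mu_{1/2}(\mathcal{A}) \leq 1 - 2^{-r}$ and the lower bound $\mu_p(\mathcal{A}) \geq 1 - (1-p)^r - \eta$, this integral is at most $(1-p)^r - 2^{-r} + \eta$.

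Next, I would compare with the OR function. For $f = x_1 \vee \ldots \vee x_r$, the quantity $\mu_q(f) = 1 - (1-q)^r$ can be written down explicitly, and Russo's lemma gives
$$\int_{p}^{1/2} I^q[x_1 \vee \ldots \vee x_r] \, dq = (1-2^{-r}) - (1-(1-p)^r) = (1-p)^r - 2^{-r}.$$
Subtracting these two identities yields
$$\int_{p}^{1/2} \bigl( I^q[\mathcal{A}] - I^q[x_1 \vee \ldots \vee x_r] \bigr) \, dq \leq \eta.$$

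Finally, by the pigeonhole principle, the integrand must attain a value at most $\eta/(1/2 - p)$ at some point $p' \in (p, 1/2)$, which gives the desired bound. There is no real obstacle here: the lemma is essentially a packaging of Russo's identity plus averaging, and the choice to compare against the OR function is forced by the form of the hypothesis on $\mu_p(\mathcal{A})$ (the ``target'' value $1 - (1-p)^r$ being exactly $\mu_p(\mathrm{OR}_{[r]})$).
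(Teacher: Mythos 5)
Your proof is correct and is essentially identical to the paper's: both use Russo's lemma to express $\mu_{1/2}(\mathcal{A}) - \mu_p(\mathcal{A})$ as $\int_p^{1/2} I^q[\mathcal{A}]\,dq$, compare with the corresponding integral for $\mathrm{OR}_{[r]}$, and then apply averaging to find a suitable $p'$. The only difference is that you make the intermediate computation for $\int_p^{1/2} I^q[x_1 \vee \cdots \vee x_r]\,dq$ slightly more explicit.
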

\begin{proof}
By Russo's lemma (Lemma \ref{lem:russo} above), we have
\[
\intop_{p}^{0.5}I^{q}\left[\mathcal{A}\right]dq=\mh\left(\mathcal{A}\right)-\mu_{p}\left(\mathcal{A}\right)\le1-2^{-r}-\left(1-(1-p)^r\right)+\eta.
\]
Hence,
\[
\intop_{p}^{0.5}\left(I^{q}\left[\mathcal{A}\right]-I^{q}\left[x_{1}\vee \ldots \vee x_{r}\right]\right)dq\le \eta.
\]
This implies that for some $p'\in (p,0.5)$ we have
$$I^{p'}\left[\mathcal{A}\right] - I^{p'}\left[x_{1}\vee \ldots \vee x_{r}\right] \leq \frac{\eta}{0.5-p},$$
as required.
\end{proof}
\begin{lem}
\label{lem:stab-iso}
There exist absolute constants $\delta_0,\epsilon_0, C_2 >0$ such that the following holds. Let $0 \leq \delta < \delta_0$, $0 \leq \epsilon < \epsilon_0$ and $1/4 \leq p < p' < 1/2$. If $\mathcal{A} \subset \p([n])$ is increasing with $\mu_{1/2}(\mathcal{A}) \leq 1-2^{-r}$, $\mu_{p}(\A) \geq 1-(1-p)^r(1+\delta)$ and
$$I^{p'}\left[\mathcal{A}\right] - I^{p'}\left[x_{1}\vee \ldots \vee x_{r}\right] \leq \epsilon (1-p')^r,$$
then there exists $R \in \binom{[n]}{r}$ such that
$$\mu_{p'}(\mathcal{A}_{R}^{\varnothing}) \leq C_2(\epsilon+\delta).$$
\end{lem}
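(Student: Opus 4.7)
The plan is to pass to the dual family $\A^{*}$ and apply the biased edge-isoperimetric stability theorem, Theorem~\ref{thm:iso-stability}. Set $q := 1-p' \in (1/2, 3/4]$. Since $\A$ is increasing, so is $\A^{*}$, and one has $\mu_q(\A^{*}) = 1-\mu_{p'}(\A)$ and $I^{q}[\A^{*}] = I^{p'}[\A]$. The increasing subcube $\s_R = \{S : R \subset S\}$ is the dual of $\OR_R$ and saturates the biased edge-isoperimetric inequality; so if one can show that $\A^{*}$ is $\mu_q$-close to some $\s_R$ with $|R|=r$, then dualizing yields $\A$ close to $\OR_R$ in $\mu_{p'}$-measure, which is what we want.

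The first step is to verify the hypothesis of Theorem~\ref{thm:iso-stability} for $\A^{*}$ at parameter $q$ with small slack. From $\mu_{p'}(\A) \geq \mu_{p}(\A) \geq 1-(1-p)^{r}(1+\delta)$ we get $\mu_q(\A^{*}) \leq (1-p)^{r}(1+\delta)$, while the influence hypothesis gives $qI^{q}[\A^{*}] \leq rq^{r} + \epsilon q^{r+1}$. Combining $\mu_{1/2}(\A^{*}) \geq 2^{-r}$ with Lemma~\ref{lem:mono} applied to the increasing family $\A^{*}$ yields $\log_q(\mu_q(\A^{*})) \leq r$, i.e.\ the matching lower bound $\mu_q(\A^{*}) \geq q^{r}$. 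Writing $\mu_q(\A^{*}) = q^{r}(1+\tau)$ with $\tau \geq 0$, the biased edge-isoperimetric inequality (Theorem~\ref{thm:skewed-iso}) rearranges to
$$r\tau - (1+\tau)\log_{1/q}(1+\tau) \leq \epsilon q,$$
which, together with $\tau \leq ((1-p)/q)^{r}(1+\delta) - 1$, forces $\tau = O(\epsilon+\delta)$ once $\epsilon_0, \delta_0$ are small enough. The isoperimetric slack $\epsilon_{\mathrm{iso}} := qI^{q}[\A^{*}]/\mu_q(\A^{*}) - \log_q(\mu_q(\A^{*}))$ is then also $O(\epsilon+\delta)$.

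Applying Theorem~\ref{thm:iso-stability} to $\A^{*}$ at $q$ with $\eta = 1/4$ (valid as $q \leq 3/4$) produces an increasing subcube $\mathcal{C}$ with $\mu_q(\A^{*} \Delta \mathcal{C}) = O(\epsilon+\delta)\,\mu_q(\A^{*})$, the logarithmic factor from the theorem absorbed into constants. Every increasing subcube has the form $\s_R$, so $\mu_q(\mathcal{C}) = q^{|R|}$; as this must agree with $\mu_q(\A^{*}) = q^{r}(1+O(\epsilon+\delta))$ up to an error of order $O((\epsilon+\delta)q^r)$, and $q$ is bounded inside $(1/2, 3/4]$, we conclude that $|R| = r$. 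Dualizing via $\s_R^{*} = \OR_R$ and the identity $\mu_q(\mathcal{X}^{*} \Delta \mathcal{Y}^{*}) = \mu_{1-q}(\mathcal{X} \Delta \mathcal{Y})$ gives $\mu_{p'}(\A \Delta \OR_R) = \mu_q(\A^{*} \Delta \s_R) = O((\epsilon+\delta)q^{r})$. Since $\{S \in \A : S \cap R = \emptyset\} \subset \A \Delta \OR_R$ has $\mu_{p'}$-measure equal to $q^{r}\mu_{p'}(\A_{R}^{\varnothing})$, dividing by $q^{r}$ yields $\mu_{p'}(\A_{R}^{\varnothing}) \leq C_2(\epsilon+\delta)$, as desired.

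The main obstacle is the bound $\tau = O(\epsilon+\delta)$. The function $\tau \mapsto r\tau - (1+\tau)\log_{1/q}(1+\tau)$ vanishes at $\tau = 0$ but is not globally monotone, and for $r \leq 1/\ln(1/q)$ it is not even locally increasing at $\tau = 0$, so the isoperimetric constraint alone does not pin $\tau$ near $0$. Ruling out the ``far'' branch requires leveraging the measure upper bound $\mu_q(\A^{*}) \leq (1-p)^{r}(1+\delta)$ and choosing $\epsilon_0$, $\delta_0$ small enough (with the implicit constants permitted to depend on $p$, $q$ and $r$).
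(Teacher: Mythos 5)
Your overall approach—dualize, apply the biased isoperimetric stability theorem (Theorem~\ref{thm:iso-stability}), and translate the resulting subcube back—is exactly the route the paper takes, and your use of Lemma~\ref{lem:mono} to get the lower bound $\mu_q(\A^*)\ge q^r$ (with $q=1-p'$) is correct. However, there is a genuine gap in the step you flag at the end, and your proposed workaround does not close it.

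The trouble is the upper bound $\mu_q(\A^*)\le (1-p)^r(1+\delta)$. Since $p<p'$ gives $1-p>q$, writing $\mu_q(\A^*)=q^r(1+\tau)$ yields only $\tau\le ((1-p)/q)^r(1+\delta)-1$, and the quantity $((1-p)/q)^r-1$ is bounded away from $0$ whenever $p'-p$ is bounded away from $0$ (e.g.\ $p=1/4$, $p'\to 1/2$ gives $(1-p)/q\to 3/2$). Thus $\tau$ is \emph{not} forced to $0$ as $\epsilon,\delta\to 0$, so the isoperimetric slack $\epsilon_{\mathrm{iso}}$ need not be $O(\epsilon+\delta)$, and Theorem~\ref{thm:iso-stability} cannot give the required $O(\epsilon+\delta)$ symmetric-difference bound. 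Your proposed fix (allowing constants to depend on $p,q,r$) does not help for two reasons: the lemma demands \emph{absolute} constants $\delta_0,\epsilon_0,C_2$; and even with that dependence permitted, shrinking $\epsilon_0,\delta_0$ does nothing to shrink $((1-p)/q)^r-1$. As you yourself observe, the isoperimetric constraint $r\tau-(1+\tau)\log_{1/q}(1+\tau)\le\epsilon q$ alone does not pin $\tau$ near $0$ when $r<1/\ln(1/q)$, and here you cannot supply a substitute.

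The missing ingredient is a second application of Lemma~\ref{lem:mono}, this time to transfer the \emph{upper} bound on the measure from $1-p$ to $1-p'$. Since $q\mapsto\log_q(\mu_q(\A^*))$ is non-increasing and $1-p'<1-p$, one has
\[
\mu_{1-p'}(\A^*)\le\bigl(\mu_{1-p}(\A^*)\bigr)^{\log_{1-p}(1-p')}\le\bigl((1-p)^r(1+\delta)\bigr)^{\log_{1-p}(1-p')}=(1-p')^r(1+\delta)^{\log_{1-p}(1-p')}\le(1-p')^r(1+3\delta),
\]
where the last step uses $p,p'\in[1/4,1/2)$ so that $\log_{1-p}(1-p')\le\log_{3/4}(1/2)<3$, together with $\delta<\delta_0$ small. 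This yields $\tau\le 3\delta$, after which the remainder of your argument goes through and the $O(\epsilon+\delta)$ bounds you want are recovered.
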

\begin{proof}
Note that for any family $\B \subset \p([n])$, we have $I^{p'}[\B] = I^{1-p'}[\B^*]$. Hence, by hypothesis, we have
$$I^{1-p'}\left[\mathcal{A}^*\right] - r(1-p')^{r-1} = I^{1-p'}\left[\mathcal{A}^*\right] - I^{1-p'}\left[x_{1}\wedge \ldots \wedge x_{r}\right] \leq \epsilon (1-p')^r.$$
Since $\mathcal{A}^*$ is increasing and $\mu_{1/2}(\A^*) = 1-\mu_{1/2}(\A) \geq 2^{-r}$, by Lemma \ref{lem:mono} we have $\mu_{1-p'}(\A^*) \geq (\mu_{1/2}(\A^*))^{\log_{1/2}(1-p')} \geq (1-p')^r$. Similarly, since $\mu_{1-p}(\A^*) = 1-\mu_p(\A^*) \leq (1-p)^r(1+\delta)$, we have
$$\mu_{1-p'}(\A^*) \leq (\mu_{1-p}(\A^*))^{\log_{1-p}(1-p')} \leq ((1-p)^r(1 + \delta))^{\log_{1-p}(1-p')} \leq (1-p')^r(1 + 3\delta),$$ 
provided $\delta_0$ is sufficiently small. Therefore,
$$\mu_{1-p'}(\A^*) \log_{1-p'}(\mu_{1-p'}(\A^*)) \geq (1-p')^r \log_{1-p'}((1-p')^r(1 + 3\delta)) \geq (r-11\delta)(1-p')^r.$$
It follows that
$$(1-p')I^{1-p'}\left[\mathcal{A}^*\right] - \mu_{1-p'}(\A^*) \log_{1-p'}(\mu_{1-p'}(\A^*)) \leq (\epsilon +11 \delta)\mu_{1-p'}(\A^*).$$

Applying Theorem \ref{thm:iso-stability} (with $\eta = 1/4$, with $1-p'$ in place of $p$ and with $\epsilon + 11\delta$ in place of $\epsilon$) to the family $\mathcal{A}^*$, we see that there exists $R \subset [n]$ such that
\begin{equation}\label{eq:diffbound} \mu_{1-p'}(\A^* \Delta \mathcal{S}_{R}) \leq C_2 (\epsilon + \delta) (1-p')^r,\end{equation}
where $C_2>0$ is an absolute constant, provided $\epsilon_0,\delta_0$ are sufficiently small. We claim that $|R| = r$. Indeed, if $|R| > r$, then
$$\mu_{1-p'}(\A^* \Delta \mathcal{S}_R) \geq \mu_{1-p'}(\A^*) - \mu_{1-p'}(\mathcal{S}_R) \geq (1-p')^r - (1-p')^{r+1} = p'(1-p')^r,$$
contradicting (\ref{eq:diffbound}) provided $\epsilon_0,\delta_0$ are sufficiently small. Similarly, if $|R| < r$, then
\begin{align*} \mu_{1-p'}(\A^* \Delta \mathcal{S}_R) & \geq \mu_{1-p'}(\mathcal{S}_R) - \mu_{1-p'}(\A^*)\\
&  \geq (1-p')^{r-1} - (1+3\delta)(1-p')^r\\
& = (1-p')^{r-1} (p' - 3(1-p')\delta),
\end{align*}
again contradicting (\ref{eq:diffbound}) provided $\epsilon_0,\delta_0$ are sufficiently small. This proves the claim. It follows that
\begin{align*} \mu_{p'}(\A_{R}^{\emptyset})& = (1-p')^{-r} \mu_{p'}(\A \setminus \mathrm{OR}_R)\\
&  \leq (1-p')^{-r} \mu_{p'}(\A \Delta \mathrm{OR}_{R})\\
& = (1-p')^{-r}\mu_{1-p'}(\A^* \Delta \mathcal{S}_R)\\
& \leq C_2(\epsilon + \delta),
\end{align*}
as required.
\end{proof}

\begin{proof}[Proof of Theorem \ref{thm:stability}.]
Let $n,k,r,s$ and $t$ be as in the statement of the theorem, where $C_0$ is to be chosen later. Let $\f\subset \binom{\left[n\right]}{k}$
be a family satisfying $\mh\left(\f^{\uparrow}\right)\le1-2^{-r}$
and $\left|\f\right|\ge\binom{n}{k}-\binom{n-r}{k}-\binom{n-r-t}{k-1}$.

Let $p = \tfrac{k/n+0.5}{2}$. By Lemma \ref{lem:Chernoff}, we have
$$\mu_{p}\left(\f^{\uparrow}\right)\ge1-(1-p)^r-\exp(-\Theta(s^2 k/n)).$$
Applying Lemma \ref{lem:russo} with $\eta = \exp(-\Theta(s^2 k/n))$ and $\mathcal{A} = \f^{\uparrow}$, yields $p' \in (p,\tfrac{1}{2})$ such that
$$I^{p'}\left[\f^{\uparrow} \right]\le I^{p'}\left[x_{1}\vee \ldots \vee x_{r}\right]+\frac{\exp(-\Theta(s^2 k/n))}{0.5-p}.$$
Provided $C_0$ is sufficiently large, we may apply Lemma \ref{lem:stab-iso} with $\delta = 2^r \exp(-\Theta(s^2 k/n))$ and 
$$\epsilon = \frac{\exp(-\Theta(s^2 k/n))}{(0.5-p)(1-p')^{r}} \leq \frac{2^r \sqrt{k}}{s} \exp(-\Theta(s^2 k/n)) \leq 2^r\exp(-\Theta(s^2 k/n)),$$
yielding
\begin{equation}\label{eq:boundy1} \mu_{p'}((\f^{\uparrow})_{R}^{\varnothing}) \leq 2^r \exp(-\Theta(s^2 k/n))\end{equation}
for some $p' \in (p,1/2)$ and some $R \in \binom{[n]}{r}$.

Applying Lemma \ref{lem:going-up} with $\g = (\f^{\uparrow})_{R}^{\varnothing}$, with $n-r$ in place of $n$, and with $p'$ in place of $p$, we obtain
\begin{equation}\label{eq:boundy2} \frac{|\f_{R}^{\varnothing}|}{\binom{n-r}{k}} \leq \frac{\mu_{p'}((\f^{\uparrow})_{R}^{\varnothing})}{\Pr[\Bin(n-r,p') \geq k]}.\end{equation}
Applying the Chernoff bound in Lemma \ref{lem:chernoff-small} with $n-r$ in place of $n$, and with $\delta := 1-k/(p'(n-r)) = \Omega(s \sqrt{k}/n)$, we obtain
\begin{equation}\label{eq:boundy3} \Pr[\Bin(n-r,p') \geq k] > 1-\exp(-\Theta(s^2 k/n)).\end{equation}
Combining (\ref{eq:boundy1}), (\ref{eq:boundy2}) and (\ref{eq:boundy3}), we obtain
$$\frac{|\f_{R}^{\varnothing}|}{{n - r \choose k}} < \frac{\mu_{p'}((\f^{\uparrow})_{R}^{\varnothing})}{1-\exp(-\Theta(s^2 k/n))} \leq 2^r \exp(-\Theta(s^2 k/n)),$$
completing the proof of Theorem \ref{thm:stability}.
\end{proof}

Before proving Theorem \ref{thm:main}, we need some additional lemmas.

\subsection*{The FKG bound}
We need the following well-known upper bound on the $p$-biased measure of the union of $r$ $1$-intersecting subfamilies of $\p([n])$; we provide a proof for completeness.
\begin{lem}
\label{lem:fkg}
If $\f_1,\ldots,\f_r \subset \p([n])$ are intersecting families, and $0 < p \leq 1/2$, then
$$\mu_p(\f_1 \cup \ldots \cup \f_r) \leq 1 - (1-p)^r.$$
\end{lem}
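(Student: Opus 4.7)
The plan is to combine the biased Erd\H{o}s--Ko--Rado theorem (Theorem \ref{thm:biased-ekr}) with the FKG/Harris correlation inequality (Corollary \ref{cor:harris}) applied to the complements. First I would reduce to the case that each $\f_i$ is increasing, by replacing $\f_i$ with its up-closure $\f_i^{\uparrow}$. The key observation here is that $\f_i^{\uparrow}$ is still intersecting: any $A',B' \in \f_i^{\uparrow}$ contain some $A,B \in \f_i$ respectively, and $A' \cap B' \supseteq A \cap B \neq \varnothing$. Clearly $\mu_p(\f_1 \cup \cdots \cup \f_r) \leq \mu_p(\f_1^{\uparrow} \cup \cdots \cup \f_r^{\uparrow})$, so we lose nothing.

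Next I would pass to complements. Set $\g_i := \p([n]) \setminus \f_i$; since each $\f_i$ is increasing, each $\g_i$ is decreasing, and
\[
\mu_p(\f_1 \cup \cdots \cup \f_r) \;=\; 1 - \mu_p(\g_1 \cap \cdots \cap \g_r).
\]
Applying the decreasing version of Corollary \ref{cor:harris} to $\g_1,\ldots,\g_r$ gives
\[
\mu_p(\g_1 \cap \cdots \cap \g_r) \;\geq\; \prod_{i=1}^{r} \mu_p(\g_i) \;=\; \prod_{i=1}^{r}(1 - \mu_p(\f_i)).
\]

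Finally, since each $\f_i$ is an intersecting family and $0 < p \le 1/2$, Theorem \ref{thm:biased-ekr} yields $\mu_p(\f_i) \leq p$ for each $i$. Therefore $1 - \mu_p(\f_i) \geq 1 - p$, and the product above is at least $(1-p)^r$, which gives the desired bound $\mu_p(\f_1 \cup \cdots \cup \f_r) \le 1 - (1-p)^r$.

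There is no real obstacle here; the only subtlety to be careful about is justifying that the up-closure preserves the intersecting property (so that Theorem \ref{thm:biased-ekr} still applies to the $\f_i^{\uparrow}$), and correctly invoking the decreasing-sets version of Harris's inequality on the complements rather than the increasing-sets version on the $\f_i$ directly.
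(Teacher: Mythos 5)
Your proof is correct and follows the paper's argument essentially verbatim: reduce to increasing families via up-closures, apply the biased Erd\H{o}s--Ko--Rado bound to each $\f_i$, and use Harris's inequality on the decreasing complements. Your explicit check that $\f_i^{\uparrow}$ remains intersecting is a point the paper leaves implicit, but it is the same argument.
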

\begin{proof}
By replacing $\mathcal{F}_i$ with $\mathcal{F}_i^{\uparrow}$ for each $i$, if necessary, we may assume that each $\mathcal{F}_i$ is increasing. For each $i$, since $\f_i$ is intersecting, Theorem \ref{thm:biased-ekr} implies that $\mu_p(\f_i) \leq p$, and therefore $\mu_{p}(\f_i^c) \geq 1-p$. Hence, using Corollary \ref{cor:harris} (applied to the down-sets $\f_1^c,\ldots,\f_r^c$), we have
$$\mu_{p}(\f_1 \cup \ldots \cup \f_r) = 1-\mu_{p}(\f_1^c \cap \ldots \cap \f_r^c) \leq 1-\prod_{i=1}^{r} \mu_{p} (\f_i^c) \leq 1-(1-p)^{r},$$
as required.
\end{proof}
Clearly, Lemma \ref{lem:fkg} is sharp, as can be seen by taking $\f_i = \{S \subset [n]:\ i \in S\}$ for each $i \in [r]$.

\subsection*{Upper bounds on linear combinations of sizes of cross-intersecting families}

\begin{lem}
\label{lem:cross} For each constant $C_{1}>0$,
there exists a constant $C_{2} = C_2\left(C_{1}\right)>0$ such that the following
holds. Let $\frac{n}{C_{1}}<k_{1}<\frac{n}{2}-C_{2},\ \frac{n}{C_{1}}<k_{2}<\frac{n}{2}-C_{2}$
with $\left|k_{1}-k_{2}\right|\le C_{1}$, and let $t_{0} \in \mathbb{N}$ with $t_0 \ge C_{2}/\log\left(\frac{n-k_{1}}{k_{1}}\right)$.
Suppose that $\g_{1}\subset \binom{\left[n\right]}{k_{1}},\ \g_{2}\subset \binom{\left[n\right]}{k_{2}}$
are cross-intersecting families with $\left|\g_{1}\right|\le\binom{n-t_{0}}{k_{1}-t_{0}}$.
Then 
\[
\left|\g_{2}\right|+C_{1}\left|\g_{1}\right|\le\binom{n}{k_{2}},
\]
and equality holds only if $\g_1 = \emptyset$.
\end{lem}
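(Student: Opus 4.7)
The plan is to apply Hilton's lemma (Lemma~\ref{lem:hilton}) in its contrapositive form, using a threshold parameter $t \ge t_0$ adapted to $|\g_1|$, and then to reduce the claim to an elementary estimate on a ratio of binomial coefficients.

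Assume $\g_1 \neq \emptyset$ (the empty case is immediate, with equality iff $\g_2 = \binom{[n]}{k_2}$). Since the sequence $t\mapsto \binom{n-t}{k_1-t}$ is strictly decreasing on $\{0,1,\ldots,k_1\}$ (each successive ratio being $(k_1-t)/(n-t)<1$), and $1 \leq |\g_1| \leq \binom{n-t_0}{k_1-t_0}$, there is a largest integer $t\ge t_0$ with $|\g_1| \leq \binom{n-t}{k_1-t}$, and by maximality $|\g_1|>\binom{n-t-1}{k_1-t-1}$. Applying Lemma~\ref{lem:hilton} with the roles of the two families exchanged (permissible since $k_1+k_2<n$) and parameter $t+1$ in its contrapositive form now gives
\[
|\g_2|<\binom{n}{k_2}-\binom{n-t-1}{k_2}.
\]
Hence $|\g_2|+C_1|\g_1|<\binom{n}{k_2}+C_1\binom{n-t}{k_1-t}-\binom{n-t-1}{k_2}$, so it suffices to prove
\[
C_1 \binom{n-t}{k_1-t}\le \binom{n-t-1}{k_2}, \qquad (\star)
\]
since the strict inequality just produced will then automatically force equality in the lemma to require $\g_1=\emptyset$.

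Writing $d = k_2-k_1$ with $|d|\le C_1$, a direct expansion gives
\[
\frac{\binom{n-t-1}{k_2}}{\binom{n-t}{k_1-t}} = \frac{n-k_2-t}{n-t}\prod_{i=0}^{t+d-1}\frac{n-k_1-i}{k_2-i}.
\]
Using $k_1 > n/C_1$ and $|d|\le C_1$, each factor in the product is at least $(n-k_1)/k_1$ up to a multiplicative factor $1-O_{C_1}(1/n)$, and since $k_1+k_2<n$ the factors $\frac{n-k_1-i}{k_2-i}$ are strictly increasing in $i$, so later factors only help. Combined with the hypothesis $t \ge t_0 \ge C_2/\log((n-k_1)/k_1)$, which gives $((n-k_1)/k_1)^t \ge 2^{C_2}$, taking $C_2=C_2(C_1)$ sufficiently large then yields $(\star)$.

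The main obstacle is the near-critical regime in which $k_1$ is close to $n/2$ (so that $(n-k_1)/k_1$ exceeds $1$ only slightly) and simultaneously $t$ is close to $k_1$: here both the prefactor $(n-k_2-t)/(n-t)$ and each individual factor of the product are only $1+\Theta(C_2/n)$, and one must exploit the monotonicity of the factors, together with the fact that $t\ge t_0$ is forced to be as large as $\Theta(n)$ in this regime, to extract the required exponential gain from the product.
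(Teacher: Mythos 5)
Your reduction of the lemma to the binomial inequality $(\star)$ via Hilton's lemma is correct, and it is essentially the same reduction the paper itself makes: the paper also picks $t$ with $\binom{n-t-1}{k_1-t-1} \le |\g_1| \le \binom{n-t}{k_1-t}$, applies Lemma~\ref{lem:hilton} to get $|\g_2|\le\binom{n}{k_2}-\binom{n-t-1}{k_2}$, and is then left with precisely the task of showing $\binom{n-t-1}{k_2}\big/\binom{n-t}{k_1-t} > C_1$, which is $(\star)$. Your product expansion of this ratio is also correct.

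However, the passage from the expansion to $(\star)$ is left as a sketch, and the sketch as stated does not close the gap. Lower-bounding each factor by $\frac{n-k_1}{k_1}\bigl(1-O_{C_1}(1/n)\bigr)$ and invoking $\left(\frac{n-k_1}{k_1}\right)^{t_0}\ge 2^{C_2}$ is not sufficient: the prefactor $\frac{n-k_2-t}{n-t}$ is bounded below only by $\frac{2C_2+1}{n}$ (using $t\le k_1$ and $n>k_1+k_2+2C_2$), which tends to zero with $n$, while $\left(\frac{n-k_1}{k_1}\right)^t$ need not grow with $n$. Concretely, take $k_1=k_2=n/2-C_2$ and $t=k_1$ (so $|\g_1|\le 1$): your lower bound on the product is $\approx e^{2C_2}$, the prefactor is $\approx 4C_2/n$, and the resulting bound $\approx 4C_2 e^{2C_2}/n$ falls below $C_1$ once $n$ is large, even though $(\star)$ is true in this case (the ratio is $\binom{n-k_1-1}{k_2}$, which is enormous). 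You correctly identify that the growth of the later factors $\frac{n-k_1-i}{k_2-i}$ must be exploited, and that in the near-critical regime $t\ge t_0=\Theta(n)$; but you do not carry the estimate out, and it is not a one-line matter --- this is exactly where the paper's own argument is terse, and the paper's assertion that $\binom{n-t-1}{k_2}=\Theta_{C_1}\bigl(\binom{n-t}{k_1}\bigr)$ likewise breaks down when $t$ is close to $k_1$. So: same route as the paper, correct identification of the obstacle, but the key inequality is not actually proved, which is a genuine gap.
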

\begin{proof}
Choose $t \in \mathbb{N}$ such that $\binom{n-t-1}{k_1-t-1}\le\left|\g_{1}\right|\le\binom{n-t}{k_{1}-t}$. Note that $t\ge t_{0}\ge C_{2} / \log\left(\frac{n-k_{1}}{k_{1}}\right)$.
By Lemma \ref{lem:hilton}, we have $\left|\g_{2}\right|\le\binom{n}{k_{2}}-\binom{n-t-1}{k_{2}}$.
So it suffices to prove that $\frac{\binom{n-t-1}{k_{2}}}{\binom{n-t}{k_{1}-t}} > C_{1}$.

Observe that
\[
\frac{\binom{n-t-1}{k_{2}}}{\binom{n-t}{k_{1}-t}}=\Theta_{C_{1}}\left(\frac{\binom{n-t}{k_{1}}}{\binom{n-t}{k_{1}-t}}\right),
\]
and 
\[
\frac{\binom{n-t}{k_{1}}}{\binom{n-t}{k_{1}-t}}=\frac{\left(n-k_{1}\right)\cdot\left(n-k_{1}-1\right)\cdot \ldots \cdot (n-k_{1}-t+1)}{(k_{1})\cdot(k_1-1) \cdot \ldots\cdot\left(k_{1}-t+1\right)}\ge\left(\frac{n-k_{1}}{k_{1}}\right)^{t}\ge2^{C_{2}}.
\]
Hence, 
\[
\frac{\binom{n-t-1}{k_{2}}}{\binom{n-t}{k_{1}-t}} = \Theta_{C_{1}}\left(\frac{\binom{n-t}{k_{1}}}{\binom{n-t}{k_{1}-t}}\right) = \Omega_{C_1}(2^{C_2}) > C_{1},
\]
 provided $C_{2}$ is sufficiently large depending on $C_1$, as required.
\end{proof}

\subsection*{Approximate containment in dictatorships}
We now show that if $\f = \f_1 \cup \ldots \cup \f_r$ with $\f_i \subset \binom{[n]}{k}$ an intersecting family for each $i \in [r]$, and $|\f| \approx {n \choose k} - {n -r \choose k}$, then not only is $\f$ well-approximated by $(\mathrm{OR}_{R})^{(k)}$ for some $R \in \binom{[n]}{r}$, but in fact each $\f_{i}$ is well-approximated by a (different) dictatorship $\d_{j}$ (with $j \in R$). Specifically, we prove the following. 
\begin{lem}
\label{lem:stab-fi}
There exists an absolute constant $C_0>0$ such that the following holds. Let $r,k \in \mathbb{N}$ with $k \geq C_0r^2$, let $s \geq C_0 \sqrt{\log k}$, let $t \in \mathbb{N}$ with $t \geq s^2 k/n$, let $n\ge2k+s\sqrt{k}$, and let $\f = \f_1 \cup \ldots \cup \f_r$, where $\f_i \subset \binom{[n]}{k}$ is an intersecting family for each $i \in [r]$. If $\left|\f\right|\ge\binom{n}{k}-\binom{n-r}{k} -\binom{n-r-t}{k-1}$, then there exists a set $R \in \binom{[n]}{r}$ and a permutation $\pi\in \mathrm{Sym}(R)$
such that $\left|\left(\f_{i}\right)_{\{\pi\left(i\right)\}}^{\varnothing}\right|\le2^{2r}e^{-\Theta\left(s^{2}k/n\right)}\binom{n-1}{k}$ for each $i \in R$. 
\end{lem}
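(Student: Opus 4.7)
The plan is to use the FKG inequality to decouple the families $\f_i^{\uparrow}$, show that each has $\mu_p$-measure close to $p$, then apply the $r=1$ case of the biased isoperimetric stability machinery (Lemmas \ref{lem:russo} and \ref{lem:stab-iso}) to each $\f_i$ separately to find a candidate dictator element $j_i$, and finally argue that the $j_i$ are distinct. Set $p = \tfrac{k/n+1/2}{2}$, so that $\tfrac{1}{2}-p = \Omega(s/\sqrt k)$. Lemma \ref{lem:Chernoff} (applied with $t = s^2$) yields
\[
\mu_p(\f^{\uparrow}) \geq 1 - (1-p)^r - \exp(-\Theta(s^2)).
\]
Each $\f_i^{\uparrow}$ is an intersecting up-set, so Theorem \ref{thm:biased-ekr} gives $\mu_p(\f_i^{\uparrow}) \leq p$. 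Applying Corollary \ref{cor:harris} to the decreasing complements $(\f_i^{\uparrow})^c$ and combining with the above yields $\prod_i (1-\mu_p(\f_i^{\uparrow})) \leq (1-p)^r + \exp(-\Theta(s^2))$. Writing $1-\mu_p(\f_i^{\uparrow}) = (1-p)(1+\varepsilon_i)$ with $\varepsilon_i \geq 0$, a first-order expansion forces $\sum_i \varepsilon_i = O(2^r\exp(-\Theta(s^2)))$, so $\mu_p(\f_i^{\uparrow}) \geq p - O(2^r\exp(-\Theta(s^2)))$ for every $i \in [r]$.

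Next, Lemma \ref{lem:russo} applied with $r=1$ to each $\f_i^{\uparrow}$ (noting $\mu_{1/2}(\f_i^{\uparrow}) \leq 1/2$ by Theorem \ref{thm:biased-ekr}) produces $p_i' \in (p,1/2)$ with $I^{p_i'}[\f_i^{\uparrow}] - 1 \leq O(2^r\exp(-\Theta(s^2)))/(1/2-p)$. Since $s \geq C_0\sqrt{\log k}$, choosing $C_0$ sufficiently large allows the factor $\sqrt k/s$ to be absorbed into the $\exp(-\Theta(s^2))$ term (with a slightly weaker constant in the exponent). Then Lemma \ref{lem:stab-iso} with $r=1$, applied with the parameters $\delta,\epsilon$ both $O(\exp(-\Theta(s^2)))$, yields a single element $j_i \in [n]$ with
\[
\mu_{p_i'}\bigl((\f_i^{\uparrow})_{\{j_i\}}^{\varnothing}\bigr) \leq O(2^r\exp(-\Theta(s^2))).
\]

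To see the $j_i$ are distinct, the pointwise decomposition $\mu_p(\f_i^{\uparrow} \triangle \mathcal{S}_{\{j_i\}}) = 2(1-p)\mu_p((\f_i^{\uparrow})_{\{j_i\}}^{\varnothing}) + (p-\mu_p(\f_i^{\uparrow}))$ gives $\mu_p(\f_i^{\uparrow} \triangle \mathcal{S}_{\{j_i\}}) = O(2^r\exp(-\Theta(s^2)))$, so the symmetric-difference triangle inequality for unions yields
\[
\mu_p\Bigl(\bigcup_i \mathcal{S}_{\{j_i\}}\Bigr) \geq \mu_p(\f^{\uparrow}) - O(r2^r\exp(-\Theta(s^2))) \geq 1 - (1-p)^r - O(r2^r\exp(-\Theta(s^2))).
\]
But $\mu_p(\bigcup_i \mathcal{S}_{\{j_i\}}) = 1 - (1-p)^{|\{j_1,\ldots,j_r\}|}$; if two of the $j_i$ coincided, then $(1-p)^{r-1} - (1-p)^r = \Theta(2^{-r})$ would be dominated by the error, impossible for $C_0$ large with $r$ fixed. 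Setting $R = \{j_1,\ldots,j_r\}$ and $\pi(i) = j_i$, I apply Lemma \ref{lem:going-up} to $\g_i := (\f_i)_{\{j_i\}}^{\varnothing} \subset \binom{[n]\setminus\{j_i\}}{k}$ (observing $\g_i^{\uparrow} = (\f_i^{\uparrow})_{\{j_i\}}^{\varnothing}$), combined with the Chernoff bound $\Pr[\mathrm{Bin}(n-1,p_i') \geq k] \geq 1 - \exp(-\Theta(s^2))$, to conclude $|\g_i| \leq 2\mu_{p_i'}(\g_i^{\uparrow})\binom{n-1}{k} \leq 2^{2r}\exp(-\Theta(s^2))\binom{n-1}{k}$.

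The main obstacle is the distinctness step: one must propagate the individual symmetric-difference bounds through the union $\bigcup_i \mathcal{S}_{\{j_i\}}$ and ensure that the aggregated $O(r2^r)$-type error remains dominated by the gap $(1-p)^{r-1} - (1-p)^r = \Theta(2^{-r})$. The other routine work is bookkeeping: verifying at each step that $\sqrt k/s$ factors get absorbed by $\exp(-\Theta(s^2))$ via the hypothesis $s \geq C_0\sqrt{\log k}$, and that the small-parameter hypotheses of Lemma \ref{lem:stab-iso} are comfortably satisfied.
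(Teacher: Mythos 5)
Your proof is correct, but it takes a genuinely different route from the paper's. The paper applies Theorem \ref{thm:stability} once to the whole union $\f$ to locate the set $R$, then runs a combinatorial argument at $p=\tfrac12$ on the slices $(\f_i)_{[r]}^{\{j\}}$: since for fixed $i$ and distinct $j_1,j_2$ these slices are cross-intersecting, each $\f_i$ can have at most one ``heavy'' slice, and Harris/FKG applied to the slices of $\f_{[r]}^{\{j\}}$ then shows each $j\in[r]$ is claimed by a unique $\f_i$, yielding the permutation $\pi$. You instead bypass the union: you use Harris/FKG once to show each individual $\f_i^{\uparrow}$ has $\mu_p$-measure within $O(2^r e^{-\Theta(s^2)})$ of $p$, then apply the $r=1$ cases of Lemmas \ref{lem:russo} and \ref{lem:stab-iso} to each $\f_i^{\uparrow}$ separately (using $\mu_{1/2}(\f_i^{\uparrow})\le \tfrac12$ from Theorem \ref{thm:biased-ekr}) to extract a candidate dictator element $j_i$; distinctness of the $j_i$ is then forced by comparing $\mu_p(\bigcup_i\mathcal S_{\{j_i\}})$ to $\mu_p(\f^{\uparrow})$ via the symmetric-difference bound $\mu_p(\f^{\uparrow}\triangle\bigcup_i\mathcal S_{\{j_i\}})\le\sum_i\mu_p(\f_i^{\uparrow}\triangle\mathcal S_{\{j_i\}})$, which would otherwise collapse the gap $p(1-p)^{r-1}=\Theta(2^{-r})$. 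Your approach is conceptually cleaner in that it treats each $\f_i$ by the same biased-isoperimetric stability mechanism used for the union, avoiding the uniform-measure cross-intersecting slicing, at the cost of invoking Lemma \ref{lem:stab-iso} $r$ times rather than reusing the already-proved Theorem \ref{thm:stability}. The bookkeeping you flag (absorbing the $\sqrt{k}/s$ factor into $e^{-\Theta(s^2)}$ via $s\ge C_0\sqrt{\log k}$, and verifying the smallness hypotheses of Lemma \ref{lem:stab-iso}) is the same kind needed in the paper's route and goes through; the decomposition $\mu_p(\f_i^{\uparrow}\triangle\mathcal S_{\{j_i\}})=2(1-p)\mu_p((\f_i^{\uparrow})_{\{j_i\}}^{\varnothing})+(p-\mu_p(\f_i^{\uparrow}))$ is correct, and the transfer from $\mu_{p_i'}$ to $\mu_p$ is valid since $(\f_i^{\uparrow})_{\{j_i\}}^{\varnothing}$ is increasing and $p<p_i'$.
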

\begin{proof}
First note that by Theorem \ref{thm:stability}, we have $\left|\f_{R}^{\varnothing}\right|\le2^{r}e^{-\Theta\left(s^{2} k/n\right)}\binom{n-r}{k}$ for some $R \in \binom{[n]}{r}$; without loss of generality, we may assume that $R = [r]$. Hence, 
\begin{align*}
\left|\f_{\left[r\right]}^{\left\{ j\right\} }\right| & \ge\binom{n-r}{k-1}\left(1-2^{r}e^{-\Theta\left(s^{2} k/n\right)}\frac{n-r-k+1}{k}\right)\\
 & =\binom{n-r}{k-1}\left(1-2^{r}e^{-\Theta\left(s^{2} k/n\right)}\right)
\end{align*}
 for each $j \in [r]$.
 
 Note that for each $j_{1} \neq j_{2}\in\left[r\right]$, the families $\left(\f_{i}\right)_{\left[r\right]}^{\left\{ j_{1}\right\} },\left(\f_{i}\right)_{\left[r\right]}^{\left\{ j_{2}\right\} }$
are cross-intersecting. So we may assume, without loss of generality, that
$\mh\left(\left(\left(\f_{i}\right)_{\left[r\right]}^{\{j\}}\right)^{\uparrow}\right)\le \tfrac{1}{2}$
for any $j\ne i$. 

Fix $j \in [r]$. By Lemma \ref{lem:going-up} together with the
Chernoff bound (\ref{eq:chernoff-small}), we have $\mu_{\frac{1}{2}}\left(\left(\f_{\left[r\right]}^{\left\{ j\right\} }\right)^{\uparrow}\right)\ge1-2^{r}e^{-\Theta\left(s^{2} k/n\right)}$.
Using Corollary \ref{cor:harris}, we have
\[
1-\mu_{\frac{1}{2}}\left(\left(\f_{\left[r\right]}^{\left\{ j\right\} }\right)^{\uparrow}\right)\ge\prod_{i=1}^{r} \left(1-\mu_{\frac{1}{2}}\left(\left(\left(\f_{i}\right)_{\left[r\right]}^{\left\{ j\right\} }\right)^{\uparrow}\right)\right)\ge\left(\frac{1}{2}\right)^{r-1}\left(1-\mh\left(\left((\f_j)_{\left[r\right]}^{\left\{ j\right\} }\right)^{\uparrow}\right)\right).
\]
 Rearranging, we obtain
\[
\mh\left(\left(\left(\f_{j}\right)_{\left\{ j\right\} }^{\left\{ j\right\} }\right)^{\uparrow}\right)\ge\mh\left(\left(\left(\f_{j}\right)_{\left[r\right]}^{\left\{ j\right\} }\right)^{\uparrow}\right)\ge1-2^{2r}e^{-\Theta\left(s^{2} k/n\right)}.
\]
 Hence $\mh\left(\left(\left(\f_{j}\right)_{\left\{ j\right\} }^{\varnothing}\right)^{\uparrow}\right)\le2^{2r}e^{-\Theta\left(s^{2} k/n\right)}$
and the lemma follows from Lemma \ref{lem:going-up} and the Chernoff bound (\ref{eq:chernoff-small}).
\end{proof}

Finally, we need the following easy combinatorial inequality.
\begin{claim}
\label{claim:indicator}
Let $\f_1,\ldots,\f_r \subset \binom{[n]}{k}$ and let $\f = \f_1 \cup \ldots\cup \f_r$. Then
 \[
\left|\f\right|\le\binom{n}{k}-\binom{n-r}{k}+\sum_{j=1}^{r} \left(\left|\left(\f_{j}\right)_{\left\{ j\right\} }^{\varnothing}\right|-\left|\left(\left(\f_{j}\right)_{\left[r\right]}^{\left\{ j\right\} }\right)^c\right|\right),
\]
where $\left(\left(\f_{j}\right)_{\left[r\right]}^{\{j\}}\right)^c : = \binom{[n] \setminus [r]}{k-1} \setminus \left(\f_{j}\right)_{\left[r\right]}^{\{j\}}$.
\end{claim}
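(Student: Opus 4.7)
My plan is to decompose $\binom{[n]}{k}$ according to the cardinality of $S\cap[r]$ and match the resulting pieces against the two sums on the right-hand side. Write $X_0 := \{S\in\binom{[n]}{k}: S\cap[r]=\varnothing\}$, $X_1 := \{S\in\binom{[n]}{k}: |S\cap[r]|=1\}$, and $X_{\ge 2} := \binom{[n]}{k}\setminus(X_0\cup X_1)$. Pascal's identity gives
\[
|X_{\ge 2}| = \binom{n}{k}-\binom{n-r}{k}-r\binom{n-r}{k-1}.
\]

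Using $|((\f_j)_{[r]}^{\{j\}})^c| = \binom{n-r}{k-1}-|(\f_j)_{[r]}^{\{j\}}|$, I would first rewrite the RHS of the claim as
\[
|X_{\ge 2}| + \sum_{j=1}^{r}\Bigl(|(\f_j)_{\{j\}}^{\varnothing}|+|(\f_j)_{[r]}^{\{j\}}|\Bigr).
\]
Unpacking the definitions, $(\f_j)_{\{j\}}^{\varnothing}$ enumerates sets in $\f_j$ not containing $j$, while $(\f_j)_{[r]}^{\{j\}}$ enumerates sets in $\f_j$ whose intersection with $[r]$ is exactly $\{j\}$. These two conditions are mutually exclusive, so setting
\[
F_j := \{S\in\f_j : j\notin S\ \text{or}\ S\cap[r]=\{j\}\}
\]
yields $|F_j| = |(\f_j)_{\{j\}}^{\varnothing}|+|(\f_j)_{[r]}^{\{j\}}|$, and the RHS becomes $|X_{\ge 2}|+\sum_{j=1}^{r} |F_j|$.

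The bound $|\f\cap X_{\ge 2}|\le |X_{\ge 2}|$ is immediate, so it suffices to prove $|\f\cap(X_0\cup X_1)|\le \sum_{j=1}^{r} |F_j|$. I would establish the stronger containment $\f\cap(X_0\cup X_1)\subseteq \bigcup_{j=1}^{r} F_j$ and then invoke the union bound. To verify the containment, take $S\in\f$ with $|S\cap[r]|\le 1$ and pick any $i\in[r]$ with $S\in\f_i$. If $S\cap[r]=\varnothing$, then $i\notin S$, so $S\in F_i$. If $S\cap[r]=\{j_0\}$ and $i=j_0$, then $S\in F_{j_0}$ via the second defining clause; if instead $i\ne j_0$, then $i\in[r]\setminus\{j_0\}\subseteq [n]\setminus S$, so $S\in F_i$ via the first clause.

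The argument is purely combinatorial bookkeeping and I do not anticipate any real obstacle. The only subtle point is the singleton case $S\cap[r]=\{j_0\}$: a priori $S$ might lie in $\f_i$ only for indices $i\ne j_0$, but the disjunction in the definition of $F_i$ accommodates this precisely because such an $i$ cannot belong to $S$, so the first clause of $F_i$ fires.
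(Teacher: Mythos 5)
Your proof is correct, and it reorganizes the paper's argument in a slightly cleaner way. The paper proves the claim by establishing a pointwise inequality
\[
1_{\f}(S)\ \le\ 1_{\OR_{[r]}}(S)+\sum_{j=1}^{r}\Bigl(1_{(\f_j)_{\{j\}}^{\varnothing}}(S)-1_{\binom{[n]\setminus[r]}{k-1}\setminus(\f_j)_{[r]}^{\{j\}}}(S\setminus\{j\})\Bigr)
\]
for every $S\in\binom{[n]}{k}$ and then summing; this requires a small accounting argument to handle the subtraction (one must check that the right-hand side is non-negative and that the single possible negative term is offset by $1_{\OR_{[r]}}$). Your version sidesteps the subtraction entirely: by using $|((\f_j)_{[r]}^{\{j\}})^c|=\binom{n-r}{k-1}-|(\f_j)_{[r]}^{\{j\}}|$ and Pascal's identity, you rewrite the right-hand side of the claim as $|X_{\ge 2}|+\sum_j |F_j|$, a sum of manifestly non-negative cardinalities, so the whole inequality reduces to the trivial bound $|\f\cap X_{\ge 2}|\le|X_{\ge 2}|$ together with the single containment $\f\cap(X_0\cup X_1)\subseteq\bigcup_j F_j$ and a union bound. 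The underlying case analysis on $|S\cap[r]|$ is identical in both treatments, but your reformulation makes the verification mechanical and avoids the sign bookkeeping, at the small cost of an extra algebraic rewrite at the start.
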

\begin{proof}
It suffices to prove that
\begin{equation}\label{eq:pointwise} 1_{\f}(S) \leq 1_{\OR_{[r]}}(S) + \sum_{j=1}^{r} \left(1_{(\f_{j})_{\{j\}}^{\varnothing}}(S)-1_{\binom{[n] \setminus [r]}{k-1} \setminus (\f_{j})_{[r]}^{\{j\}}}(S \setminus \{j\})\right)\end{equation}
for all $S \in \binom{[n]}{k}$. (The statement of the claim then follows by summing (\ref{eq:pointwise}) over all $S \in \binom{[n]}{k}$.) To prove (\ref{eq:pointwise}), observe that for any set $S \in \binom{[n]}{k}$, we have
$$1_{\binom{[n] \setminus [r]}{k-1} \setminus (\f_{j})_{[r]}^{\{j\}}}(S \setminus \{j\}) = 1 \Rightarrow S \cap [r]=\{j\} \Rightarrow 1_{\OR_{[r]}}(S) = 1,$$
so the right-hand side of (\ref{eq:pointwise}) is always non-negative. Hence, we may assume that $S \in \f$. Without loss of generality, we may assume that $S \in \f_1$. If $|S \cap [r]| \geq 2$ or $S \cap [r]=\{1\}$, then 
$$1_{\binom{[n] \setminus [r]}{k-1} \setminus (\f_{j})_{[r]}^{\{j\}}}(S \setminus \{j\}) = 0 \ \forall j \in [r],\quad 1_{\OR_{[r]}}(S) = 1,$$
so the right-hand side of (\ref{eq:pointwise}) is at least 1, and we are done. If $S \cap [r] = \emptyset$, then
$$1_{\binom{[n] \setminus [r]}{k-1} \setminus (\f_{j})_{[r]}^{\{j\}}}(S \setminus \{j\}) = 0 \ \forall j \in [r],\quad 1_{(\f_{1})_{\{1\}}^{\varnothing}}(S) = 1,$$
so the right-hand side of (\ref{eq:pointwise}) is at least 1, and we are done. Finally, if $S \cap [r] = \{i\}$ for some $i > 1$, then we have
$$1_{\binom{[n] \setminus [r]}{k-1} \setminus (\f_{j})_{[r]}^{\{j\}}}(S \setminus \{j\}) = 1$$
only if $j=i$, whereas $1_{\OR_{[r]}}(S) = 1_{(\f_{1})_{\{1\}}^{\varnothing}}(S) = 1$, so the right-hand side of (\ref{eq:pointwise}) is at least 1, and we are done.
\end{proof}

\begin{proof}[Proof of Theorem \ref{thm:main}.]
Let $\f = \f_1 \cup \ldots \cup \f_i$, where $\f_i \subset \binom{\left[n\right]}{k}$ is an intersecting family for each $i \in [r]$, and suppose that $|\f| \geq {n \choose k} - {n-r \choose k}$. Then $\mathcal{F}$ cannot contain $r+1$ pairwise disjoint sets, so by Theorem \ref{thm:frankl-matching}, if $n \geq (2r+1)k-r$, we have $|\f| \leq {n \choose k} - {n-r \choose k}$, with equality only if $\mathcal{F} = (\mathrm{OR}_R)^{(k)}$ for some $R \in \binom{[n]}{r}$. Hence, we may assume throughout that $n \leq (2r+1)k-r-1$. Moreover, by choosing $C = C(r)$ to be sufficiently large, we may assume throughout that $n \geq n_0(r)$ for any $n_0(r) \in \mathbb{N}$.

By Theorem \ref{thm:stability} (applied with $s = C(r)k^{1/6}$, where $C(r) \in \mathbb{N}$ is to be chosen later), Lemma \ref{lem:fkg} and Lemma \ref{lem:stab-fi}, there exists a set $R \in \binom{[n]}{r}$ and a permutation $\pi \in \mathrm{Sym}(R)$ such that
$$\left|\left(\f_{i}\right)_{\{\pi\left(i\right)\}}^{\varnothing}\right|\le2^{2r}e^{-\Theta\left(s^{2}k/n\right)}\binom{n-1}{k}$$
for each $i \in R$. Without loss of generality, we may assume that $R = [r]$ and $\pi = \textrm{Id}$, so that
\begin{equation}\label{eq:unif-bound} \left|\left(\f_{i}\right)_{\{i\}}^{\varnothing}\right|\le2^{2r}e^{-\Theta\left(s^{2} k/n\right)}\binom{n-1}{k}\end{equation}
for all $i \in [r]$.

By Claim \ref{claim:indicator}, we have 
\begin{equation}
\label{eq:lin-eq}
\left|\f\right|\le\binom{n}{k}-\binom{n-r}{k}+\sum_{j=1}^{r} \left(\left|\left(\f_{j}\right)_{\left\{ j\right\} }^{\varnothing}\right|-\left|\left(\left(\f_{j}\right)_{\left[r\right]}^{\left\{ j\right\} }\right)^c\right|\right).
\end{equation}

We now wish to apply Lemma \ref{lem:cross}. To this end, define
$$t_0 :=\left\lceil C_2(\max\{2^{r-1},2r+1\}) /\log\left(\frac{n-r-k}{k}\right)\right \rceil,$$
where $C_2(\cdot)$ is the function defined in Lemma \ref{lem:cross}. Since $n \geq 2k+C(r)k^{2/3} \geq 2k+k^{2/3}$, and since by assumption $ (2r+1)k \geq n \geq n_0(r)$, we have $t_0 = O_r(k^{1/3})$, and therefore
\begin{align*} \frac{\binom{n-1-(r+t_0-1)}{k-(r+t_0-1)}}{\binom{n-1}{k}} & \geq \left(\frac{k-r-t_0+2}{n-r-t_0+1}\right)^{r+t_0-1}\\
& \geq \left(\frac{1}{2r+2}\right)^{r+t_0-1}\\
& \geq \exp(-\Theta_r(k^{1/3}))\\
& > 2^{2r} \exp(-\Theta(s^{2} k/n)),
\end{align*}
provided $C = C(r) \in \mathbb{N}$ and $n_0=n_0(r) \in \mathbb{N}$ are chosen to be sufficiently large. Therefore, using (\ref{eq:unif-bound}), for all $j \in [r]$ and for all $T \subset [r] \setminus \{j\}$, we have
\begin{align*} \left|(\f_j)_{[r]}^{T} \right| &\leq \left|\left(\f_{j}\right)_{\{j\}}^{\varnothing}\right|\\
& \leq 2^{2r}e^{-\Theta\left(s^{2}k/n\right)}\binom{n-1}{k} \\
&< \binom{n-1-(r+t_0-1)}{k-(r+t_0-1)}\\
& \leq \binom{n-r-t_{0}}{k-|T|-t_{0}}.
\end{align*}
By our choice of $t_0$, we have
$$t_0 \geq C_2(\max\{2^{r-1},2r+1\}) /\log\left(\frac{n-r-k+|T|}{k - |T|}\right)$$
for all $j \in [r]$ and all $T \subset [r] \setminus \{j\}$. Hence, for any such $j$ and $T$, we may apply Lemma \ref{lem:cross} to the pair of cross-intersecting families $\mathcal{G}_1 = (\f_j)_{[r]}^{T}$ and $\mathcal{G}_2 = \left(\f_{j}\right)_{\left[r\right]}^{\left\{ j\right\}}$, with $n-r$ in place of $n$, $C_1 = \max\{2^{r-1},2r+1\}$, $k-r+1 \leq k_1 \leq k$, $k_2 = k-1$, and the above value of $t_0$. This yields
\begin{equation} \label{eq:bound} \left|\left(\f_{j}\right)_{\left\{ j\right\} }^{\varnothing}\right|-\left|\left(\left(\f_{j}\right)_{\left[r\right]}^{\left\{ j\right\} }\right)^c\right| \le 2^{r-1} \max_{T \subset [r] \setminus \{j\}} \left|\left(\f_{j}\right)_{[r]}^{T}\right|-\left|\left(\left(\f_{j}\right)_{\left[r\right]}^{\left\{ j\right\} }\right)^c\right|\leq 0\end{equation}
for each $j \in [r]$.

Combining (\ref{eq:lin-eq}) and (\ref{eq:bound}) yields $|\f| \leq {n \choose k}-{n-r \choose k}$. By hypothesis, we have $|\f| \geq {n \choose k} - {n-r \choose k}$, and therefore $|\f| = {n \choose k} - {n-r \choose k}$, so equality holds in (\ref{eq:bound}) for each $j \in [r]$. Therefore, by Lemma \ref{lem:cross}, $\left(\f_{j}\right)_{[r]}^{T} = \emptyset$ for all $T \subset [r] \setminus \{j\}$, i.e.\ $\left(\f_{j}\right)_{\left\{ j\right\}}^{\varnothing} = \emptyset$, so $\f_j \subset \d_j$ for all $j \in [r]$. Hence, $\f \subset \mathrm{OR}_{[r]}$, so $\f = (\mathrm{OR}_{[r]})^{(k)}$, as required.
\end{proof}

\subsection*{Acknowledgements}
We would like to thank Nicholas Day for informing us of the open problem considered here, and an anonymous referee for their careful reading of the paper and their helpful suggestions.

\end{document}